\newtheorem{theo}{Theorem}[section]
\newtheorem{cor}[theo]{Corollary}
\newtheorem{defin}[theo]{Definition}
\theoremstyle{definition}
\newtheorem{exam}[theo]{Example}
\newtheorem{rem}[theo]{Remark}
\newtheorem{conj}[theo]{Conjecture}
\newtheorem{alg}[theo]{Algorithm}
\newcommand{\Hom}{{\mathrm{Hom}}}
\newcommand{\PSL}{{\mathrm{PSL}}}
\newcommand{\PSU}{{\mathrm{PSU}}}
\newcommand{\SL}{{\mathrm{SL}}}
\newcommand{\PGL}{{\mathrm{PGL}}}
\newcommand{\GL}{{\mathrm{GL}}}
\newcommand{\ord}{{\mathrm{ord}}}
\newcommand{\sgn}{{\mathrm{sgn}}}
\newcommand{\tr}{{\mathrm{tr}}}
\newcommand{\Sz}{{\mathrm{Sz}}}
\newcommand{\Ga}{\Gamma}
\newcommand{\al}{\alpha}
\newcommand{\be}{\beta}
\newcommand{\ga}{\gamma}
\newcommand{\la}{\lambda}
\newcommand{\FQ}{\mathbb{F}_q}
\title{New Beauville surfaces and finite simple groups}
\author{Shelly Garion, Matteo Penegini}
\address{Shelly Garion\\  Max-Planck-Institute for Mathematics\\ D-53111 Bonn, Germany}
\email{shellyg@mpim-bonn.mpg.de}
\address{Matteo Penegini\\
Lehrstuhl Mathematik VIII\\ Universit\"at Bayreuth, NWII\\ D-95440
Bayreuth, Germany} \email{matteo.penegini@uni-bayreuth.de}
\subjclass[2000]{14J10,14J29,20D06,20H10,30F99.}
\begin{document}


\maketitle


\begin{abstract}
In this paper we construct new Beauville surfaces with group either
$\PSL(2,p^e)$, or belonging to some other families of finite simple
groups of Lie type of low Lie rank, or an alternating group, or a
symmetric group, proving a conjecture of Bauer, Catanese and
Grunewald. The proofs rely on probabilistic group theoretical
results of Liebeck and Shalev, on classical results of Macbeath and
on recent results of Marion.
\end{abstract}

\section{Introduction}\label{sect.intro}

A \emph{Beauville surface} $S$ (over $\mathbb{C}$) is a particular
kind of surface isogenous to a higher product of curves, i.e.,
$S=(C_1 \times C_2)/G$ is a quotient of a product of two smooth
curves $C_1$, $C_2$ of genera at least two, modulo a free action
of a finite group $G$, which acts faithfully on each curve. For
Beauville surfaces the quotients $C_i/G$ are isomorphic to
$\mathbb{P}^1$ and both projections $C_i \rightarrow C_i/G \cong
\mathbb{P}^1$ are coverings branched over three points. A
Beauville surface is in particular a minimal surface of general
type.

Beauville surfaces were introduced by F. Catanese in \cite{Cat00},
inspired by a construction of A. Beauville (see \cite{Be}). Catanese was interested in finding examples of \emph{strongly rigid surfaces} $S$, i.e., if $S'$ is another surface homotopically equivalent to $S$ then $S'$ is either biholomorphic or antibiholomorphic to $S$. In \cite{Cat00} he proved that in general if $C_1$ and $C_2$ are two triangle curves with group $G$, if the action of $G$ on the product $C_1 \times C_2$ is free, then $S=(C_1 \times C_2)/G$ is a strongly rigid surface. Since the original example of Beauville had this property he proposed to name these surfaces Beauville surfaces.

A Beauville surface $S$ is either of \emph{mixed} or \emph{unmixed}
type according respectively as the action of $G$ exchanges the two
factors (and then $C_1$ and $C_2$ are isomorphic) or $G$ acts
diagonally on the product $C_1 \times C_2$. The subgroup $G_0$ (of
index $\leq 2$) of $G$ which preserves the ordered pair $(C_1,C_2)$
is then respectively of index $2$ or $1$ in $G$.

Any Beauville surface $S$ can be presented in such a way that the
subgroup $G_0$ of $G$ acts effectively on each of the factors $C_1$
and $C_2$. Catanese called such a presentation \emph{minimal} and
proved its uniqueness in \cite{Cat00}. In this paper we shall consider only minimal Beauville surfaces of unmixed
type so that $G_0 = G$.




Working out the definition of Beauville surfaces one sees that
there is a pure group theoretical condition which characterizes
the groups of Beauville surfaces: the existence of what in \cite{BCG05}
and \cite{BCG06} is called a ''Beauville structure''.

\begin{defin}\label{prop.beau}
An \emph{unmixed Beauville structure} for a finite group $G$ is a
quadruple $(x_1,y_1;x_2,y_2)$  of elements of $G$, which
determines two triples $T_i:=(x_i,y_i,z_i)$ ($i=1,2$) of elements
of $G$ such that :
\begin{enumerate}\renewcommand{\theenumi}{\it \roman{enumi}}
    \item $x_iy_iz_i=1$,
    \item $\langle x_i,y_i \rangle = G$,
    \item $\Sigma(T_1) \cap \Sigma(T_2)= \{1\}$, where
    \[ \Sigma(T_i):= \bigcup_{g \in G} \bigcup^{\infty}_{j=1}
    \{gx^j_ig^{-1},gy^j_ig^{-1},gz^j_ig^{-1}\}.
    \]
\end{enumerate}
\end{defin}
Moreover, $\tau_i:=(\ord(x_i),\ord(y_i),\ord(z_i))$ is called the
\emph{type} of $T_i$, and a type which satisfies the condition:
\[ \frac{1}{\ord(x_i)}+\frac{1}{\ord(y_i)}+\frac{1}{\ord(z_i)}<1
\]
is called \emph{hyperbolic}.


Therefore, the question of which finite groups $G$ admit an
unmixed Beauville structure was raised.
%
%
%
%
%
%
%
This question is deeply related to the question of which finite groups
are quotients of certain triangle groups, which was widely
investigated (see~\cite{Co90, Co10} for a survey). Indeed,
conditions~\textit{(i)} and~\textit{(ii)} of
Definition~\ref{prop.beau} clearly imply that two certain triangle
groups surject onto the finite group $G$. However, the question
about Beauville structures is somewhat more delicate, due to
condition~\textit{(iii)} of Definition~\ref{prop.beau}.

In this paper we show that $\PSL(2,p^e)$, and some other families of finite simple
groups of Lie type of low Lie rank admit a Beauville structure. Moreover for the alternating and
symmetric groups we prove the stronger statement that almost all of these groups admit a Beauville structure with fixed type.
For a detailed account of which other finite groups admit an unmixed Beauville structure we refer to the introduction of \cite{G}.

The main results of this work are the following Theorems.

\begin{theo}\label{thm.unmixed.An}
Let $(r_1, s_1, t_1), (r_2, s_2, t_2)$ be two hyperbolic types.
Then almost all alternating groups $A_n$ admit an unmixed
Beauville structure $(x_1, y_1; x_2, y_2)$ where $(x_1,y_1,
(x_1y_1)^{-1})$ has type $(r_1, s_1, t_1)$ and $(x_2, y_2,
(x_2y_2)^{-1})$ has type $(r_2, s_2, t_2)$.
\end{theo}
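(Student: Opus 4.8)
The plan is to realize each of the two hyperbolic types as an explicit permutation triple in a large alternating group, and then to arrange, by a counting or probabilistic argument, that the two resulting triples satisfy the disjointness condition~\textit{(iii)} of Definition~\ref{prop.beau}. Concretely, fix hyperbolic types $(r_1,s_1,t_1)$ and $(r_2,s_2,t_2)$. For the first two conditions I would invoke the existence of spreads of triangle-group-type generating triples in $A_n$: by Everitt's theorem (the motivating input mentioned in the excerpt) every hyperbolic triangle group surjects onto all but finitely many $A_n$, which already supplies, for each large enough $n$, a triple $(x_i,y_i,z_i)$ with $x_iy_iz_i=1$, $\langle x_i,y_i\rangle = A_n$, and prescribed orders $(r_i,s_i,t_i)$. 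The substance of the theorem is therefore not the generation but the \emph{simultaneous} choice of two such triples whose conjugacy-closure sets $\Sigma(T_1)$ and $\Sigma(T_2)$ meet only in the identity.

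The key technical reduction I would make is to control the cycle types, not just the orders, of the three elements $x_i,y_i,z_i$ and of all their powers. The set $\Sigma(T_i)$ consists of all $A_n$-conjugates of all powers of the three distinguished elements, so its conjugacy-invariant content is exactly a finite list of cycle types. Two such sets intersect nontrivially precisely when a nonidentity cycle type appears on a power of some element from $T_1$ and also on a power of some element from $T_2$. My plan is to engineer the permutations so that, for the first triple, all relevant powers have cycle types drawn from one controlled regime (for instance, built from cycles whose lengths divide a fixed number $M_1$ governed by $r_1,s_1,t_1$), and for the second triple from a disjoint regime governed by $r_2,s_2,t_2$, so that no common nonidentity cycle type can occur. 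The natural way to obtain such rigid cycle structure while retaining generation is the explicit coset-diagram / ''gluing'' constructions of Everitt and their refinements, in which one stitches together standard building blocks so that $x_i$ and $y_i$ consist almost entirely of cycles of prescribed length with a bounded number of fixed points or short correcting cycles.

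For the probabilistic half, the cleaner route is to use the Liebeck--Shalev machinery advertised in the abstract. Here the strategy is: first show that for each $i$ the proportion of pairs $(x_i,y_i)$ in the relevant conjugacy classes (or in the relevant fibres of the order conditions) that generate $A_n$ tends to $1$ as $n\to\infty$; this is exactly the kind of statement Liebeck--Shalev establish for random generation in alternating groups. Second, estimate the number of elements of each fixed cycle type and show that the ''bad'' event---that a random first triple and a random second triple share a common nonidentity power up to conjugacy---has probability tending to $0$. Combining the two, for all sufficiently large $n$ there exists a choice of $(x_1,y_1;x_2,y_2)$ simultaneously generating $A_n$, realizing the two prescribed types, and satisfying $\Sigma(T_1)\cap\Sigma(T_2)=\{1\}$, which is precisely an unmixed Beauville structure. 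One must also be careful that the two generating triples genuinely act faithfully, but since the groups are simple (or nearly so) faithfulness is automatic once generation holds.

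The main obstacle I anticipate is condition~\textit{(iii)}, and specifically controlling the \emph{powers} rather than the elements themselves. It is easy to make $x_1$ and $x_2$ have coprime orders, but $\Sigma(T_i)$ folds in all powers and all conjugates, so one must rule out coincidences across every divisor of every order and across all three of $x_i,y_i,z_i$ simultaneously; a single shared cycle type on, say, $x_1^a$ and $z_2^b$ destroys the structure. The delicate point is that the two types are \emph{prescribed in advance} and cannot be chosen coprime, so the separation has to come from the finer cycle-type data of the explicit construction rather than from a crude order argument. I expect the bulk of the work, and the place where the Everitt-style constructions together with the Liebeck--Shalev estimates must be combined most carefully, to lie exactly in verifying this simultaneous cycle-type disjointness uniformly for all large $n$.
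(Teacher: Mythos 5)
Your proposal correctly isolates the crux---condition \textit{(iii)}, i.e.\ controlling the cycle types of all powers of all six elements---and the right main tool (Liebeck--Shalev), but the concrete separating mechanism you propose has a genuine gap. You suggest putting the first triple's cycles in a ``regime'' of lengths governed by $r_1,s_1,t_1$ and the second in a disjoint regime governed by $r_2,s_2,t_2$; this cannot work as stated, because the theorem allows the two prescribed types to overlap or even coincide (e.g.\ both equal to $(7,7,7)$), in which case the element orders, and hence any divisibility-based regimes, are identical. You notice this tension yourself (``the separation has to come from the finer cycle-type data''), but you leave that finer mechanism unspecified, and your fallback---a probabilistic estimate that two independently chosen random triples share no common nonidentity power up to conjugacy---is neither carried out nor supported by the Liebeck--Shalev results you invoke, which concern surjectivity of random homomorphisms with images in \emph{prescribed} conjugacy classes, i.e.\ $\Hom_{\mathbf{C}}(\Gamma,S_n)$, not coincidence probabilities between two random triples. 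Note also that to realize the \emph{exact} orders $(r_i,s_i,t_i)$ (rather than divisors of them, which is all that a random homomorphism from the triangle group guarantees) you need the prescribed-class version in any case.

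The paper closes exactly this gap with one deterministic idea absent from your proposal: choose the six conjugacy classes to be almost homogeneous, of cycle shape $(m_i^{k_i},1^{f_i})$, with the $k_i$ even (so the classes consist of even permutations and the sign condition of Theorem~\ref{thm.conj.An} holds) and with the six fixed-point counts $f_i$ \emph{pairwise distinct} and bounded independently of $n$; Algorithm~\ref{alg.conj.class} arranges $k_im_i\neq k_jm_j$ for $i \neq j$ to guarantee this. The key observation is that every nontrivial power of an element of shape $(m^k,1^f)$ still has exactly $f$ fixed points, since each $m$-cycle powers either to the identity on its support or to cycles of equal length greater than $1$. Hence nontrivial powers of elements from different classes have different numbers of fixed points, so they are never conjugate in $S_n$, and $\Sigma(T_1)\cap\Sigma(T_2)=\{1\}$ holds automatically---uniformly in $n$ and independently of any order coincidences between the two types. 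Generation then follows from Theorem~\ref{thm.conj.An} alone; no second probabilistic estimate and no Everitt-style explicit coset-diagram construction is needed.
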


A similar theorem also applies for symmetric groups.

\begin{theo}\label{thm.unmixed.Sn}
Let $(r_1, s_1, t_1), (r_2, s_2, t_2)$ be two hyperbolic types,
and assume that at least two of $(r_1,s_1,t_1)$ are even and at
least two of $(r_2,s_2,t_2)$ are even. Then almost all symmetric
groups $S_n$ admit an unmixed Beauville structure $(x_1, y_1; x_2,
y_2)$ where $(x_1,y_1,(x_1y_1)^{-1})$ has type $(r_1, s_1, t_1)$
and $(x_2, y_2, (x_2y_2)^{-1})$ has type $(r_2, s_2, t_2)$.
\end{theo}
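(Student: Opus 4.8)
The plan is to separate the three requirements of a Beauville structure. Conditions \textit{(i)} and \textit{(ii)} amount to producing, for each $i$, a generating triple of $S_n$ of the prescribed type, while condition \textit{(iii)} is a purely combinatorial separation of the two triples. First I would record the parity obstruction that explains the hypothesis. Since $x_iy_iz_i=1$ we have $\sgn(x_i)\sgn(y_i)\sgn(z_i)=1$, so an even number of $x_i,y_i,z_i$ are odd permutations; as generating $S_n$ (rather than a subgroup of $A_n$) forces at least one odd permutation, exactly two of $x_i,y_i,z_i$ must be odd. An odd permutation has at least one even cycle and hence even order, so at least two of $r_i,s_i,t_i$ must be even. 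Thus the hypothesis is precisely this necessary parity condition, and the construction should realize each triple with exactly two odd permutations.

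Next I would fix the conjugacy classes. For each order $r$ occurring in a type I would use a \emph{homogeneous} class, that is, take the corresponding element to be a disjoint union of $k$ cycles all of length $r$ (padded by fixed points), so that its order is exactly $r$ and its sign is $(-1)^{k(r-1)}$. The crucial feature is that for a homogeneous element every nontrivial power moves exactly the same number of points, namely $kr$: raising a union of $r$-cycles to a power $j$ with $r\nmid j$ only subdivides each cycle into cycles of length $r/\gcd(r,j)\ge 2$, leaving the support unchanged. I would then choose the numbers of cycles $k$ so that (a) the required two elements of each triple are odd permutations (taking the relevant $k$ odd when the cycle length is even, and even otherwise), and (b) the three numbers of moved points $\{k_1 r_1,k_2 s_1,k_3 t_1\}$ of the first triple are \emph{disjoint} from the three numbers $\{k_1' r_2,k_2' s_2,k_3' t_2\}$ of the second. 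Since $n$ is large there is ample room to arrange this, even when the two types coincide.

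With such a choice condition \textit{(iii)} becomes automatic: the number of moved points is invariant under conjugation and, for homogeneous elements, under nontrivial powering, so every nontrivial element of $\Sigma(T_1)$ moves a number of points lying in $\{k_1 r_1,k_2 s_1,k_3 t_1\}$ and every nontrivial element of $\Sigma(T_2)$ moves a number lying in the disjoint set $\{k_1' r_2,k_2' s_2,k_3' t_2\}$; hence $\Sigma(T_1)\cap\Sigma(T_2)=\{1\}$. It remains to secure \textit{(i)} and \textit{(ii)}, i.e.\ to find, for $n\gg 0$, elements $x_i\in C_1^{(i)}$ and $y_i\in C_2^{(i)}$ with $(x_iy_i)^{-1}\in C_3^{(i)}$ generating $S_n$. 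This is exactly where the probabilistic results of Liebeck and Shalev enter: the character-theoretic formula for the number of solutions of $x_iy_iz_i=1$ in prescribed classes is dominated by the contribution of the trivial character, so for a hyperbolic type the number of such triples is positive for large $n$ while the proportion failing to generate $S_n$ is negligible.

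The main obstacle is this last step, reconciling \textit{(ii)} with \textit{(iii)}. Generation forces the supports of $x_i$ and $y_i$ to cover $\{1,\dots,n\}$, so at least one element of each triple must move nearly $n$ points; these ``large'' elements cannot be separated by crude support size alone, and one must check that the distinct-moved-point requirement above is still compatible with transitivity (for instance by letting the two nearly full-support elements differ by a single cycle). Equally delicate is verifying that the Liebeck--Shalev estimates apply to these specific homogeneous classes, whose number of cycles grows with $n$, and that they produce genuinely \emph{generating} solutions lying in the prescribed third class rather than merely a nonzero structure constant. Controlling these character sums uniformly in $n$, together with the bookkeeping that keeps both the parities and the six moved-point counts as required, is the technical heart of the argument.
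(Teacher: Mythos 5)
Your proposal follows essentially the same route as the paper's proof: almost homogeneous classes of the prescribed orders, exactly two odd-permutation classes per triple (with your parity analysis explaining the hypothesis), separation of $\Sigma(T_1)$ and $\Sigma(T_2)$ via the conjugation- and powering-invariant count of moved points (equivalent to the paper's fixed-point count), and the Liebeck--Shalev results for generation. The only inaccuracy is that the ``technical heart'' you flag at the end is already resolved by the precise form of the cited results (Theorem~\ref{thm.conj.An} and Corollary~\ref{cor.conj.Sn}, i.e.\ \cite[Theorems 1.9, 1.10]{LS04}): these are stated exactly for homomorphism sets $\Hom_{\mathbf{C}}(\Gamma,S_n)$ with \emph{all} classes prescribed (including the third), with bounded fixed-point counts and cycle numbers growing with $n$, and they directly give image containing $A_n$ (hence equal to $S_n$ in the presence of odd classes) with probability tending to $1$, so no further character-sum control or transitivity check is required.
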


This two Theorems completely solve \cite[Conjecture 7.18]{BCG06} by Bauer,
Catanese and Grunewald. The conjecture was inspired by
the proof of Everitt~\cite{Ev} to Higman's Conjecture that every
hyperbolic triangle group surjects to all but finitely many
alternating groups. The proofs of both Theorems are presented in Section~\ref{sect.an},
and are based on results of Liebeck and Shalev \cite{LS04}, who gave
an alternative proof, based on probabilistic group theory, to
Higman's Conjecture. In Section~\ref{sect.an} we also provide theorems similar to Theorem \ref{thm.unmixed.An} and \ref{thm.unmixed.Sn} for surfaces
isogenous to a higher product not necessarily Beauville.

Next, we have the other results on Beauville surfaces.

\begin{theo}\label{thm.unmixed.PSL}
Let $p$ be a prime number, and assume that $q=p^e$ is at least $7$.
Then the group $\PSL(2,q)$ admits an unmixed Beauville structure.
\end{theo}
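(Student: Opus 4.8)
The plan is to produce, for each $q = p^e \geq 7$, two triples generating $\PSL(2,q)$ whose element-order "signatures" $\Sigma(T_1)$ and $\Sigma(T_2)$ intersect only in the identity. I need to understand the structure of $\PSL(2,q)$, find explicit generating pairs, and control orders.

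Let me recall the subgroup structure of $\PSL(2,q)$. The maximal subgroups are well-known (Dickson): dihedral groups of orders $q-1$ and $q+1$ (or their halves), Borel subgroups (cyclic of order $(q\pm1)/\gcd$, elementary abelian $p$-group extended by cyclic), $A_4$, $S_4$, $A_5$, and $\PSL(2,p^f)$ for $f | e$. The element orders in $\PSL(2,q)$ are: $p$ (unipotent), divisors of $(q-1)/d$ (split torus), and divisors of $(q+1)/d$ (nonsplit torus), where $d = \gcd(2, q-1)$.

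The key observation for condition (iii) is that two cyclic subgroups of a torus are conjugate only if... and more usefully, the maximal tori (split and nonsplit) intersect trivially in $\PSL(2,q)$ or intersect only in small subgroups. So the strategy should be:
- Take $T_1$ so that all of $x_1, y_1, z_1$ have orders dividing $(q-1)/d$ (or are $p$), living in the split torus side.
- Take $T_2$ so that the orders divide $(q+1)/d$, living in the nonsplit side.

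Since elements of order dividing $(q-1)/d$ and elements of order dividing $(q+1)/d$ are never conjugate (they lie in non-conjugate maximal tori) and $\gcd((q-1)/d, (q+1)/d)$ is small (either $1$ or $2$), the sets $\Sigma(T_1), \Sigma(T_2)$ should meet only in the identity, provided one also avoids the characteristic prime $p$ coinciding.

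So my plan, concretely:

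First I would invoke Macbeath's classical results (cited as Macbeath in the abstract) on generating triples of $\PSL(2,q)$. Macbeath classified, for each triple of conjugacy classes (given by traces), when there exist $x, y$ with $xy = z$ in those classes generating the group; he characterized the "non-exceptional" traces that force $\langle x, y\rangle = \PSL(2,q)$. This gives condition (ii). I would use this to produce, for suitable orders, generating pairs.

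Second, I would arrange the types. The plan is to choose:
\begin{itemize}
\item $T_1$ of a type $(a_1, b_1, c_1)$ where each of $a_1, b_1, c_1$ divides $(q-1)/d$, so all elements $x_1, y_1, z_1$ lie in (conjugates of) the split torus; choose $T_2$ of type $(a_2, b_2, c_2)$ where each order divides $(q+1)/d$. One should also handle small $q$ separately since small tori may not admit three distinct useful orders.
\item For this to give a genuine Beauville structure I need the orders to actually be attainable and the triples to generate; this is where Macbeath's trace conditions come in, and where I must check the types are hyperbolic (automatic for large enough divisors).
\end{itemize}

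Third, I would verify condition (iii) using the torus-disjointness: an element of order $m$ with $m \mid (q-1)/d$ is conjugate into the split torus, an element of order $m' \mid (q+1)/d$ into the nonsplit torus, and a nontrivial element lies in at most one conjugacy class of maximal torus. Hence $\Sigma(T_1) \cap \Sigma(T_2) = \{1\}$ unless a common small order (dividing $\gcd((q-1)/d,(q+1)/d) = 1$ or $2$) appears; I would choose the types with all orders odd, or at least coprime across the two triples, to kill this.

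The main obstacle I anticipate is twofold. For small $q$ (say $q = 7, 8, 9, 11, 13$) the tori are too small to contain three distinct elements of convenient coprime orders, so the clean "split versus nonsplit" dichotomy may fail and these cases likely require separate, possibly computer-assisted, verification. For general $q$, the delicate part is simultaneously meeting Macbeath's generation constraints (avoiding the exceptional trace triples that land inside a proper subgroup like $\PSL(2, p^f)$, $A_5$, $S_4$, or a dihedral group) while keeping the orders within the prescribed torus. Ruling out that a candidate generating pair falls into a subfield subgroup $\PSL(2,p^f)$ is the genuinely technical step, and I expect to lean on the cited results of Marion on triangle generation of groups of Lie type to handle it uniformly for large $q$.
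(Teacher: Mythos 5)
Your overall strategy coincides with the paper's: Macbeath's trace theorems for generation, one triple with orders from the split torus and one from the nonsplit torus, coprimality of $(q-1)/d$ and $(q+1)/d$ (where $d=\gcd(2,q-1)$) for condition \textit{(iii)}, and separate computer verification for small $q$ (the paper does exactly this for $q=7,9,11$ via MAGMA). However, there is a genuine gap at the step you yourself identify as the crux: ruling out that Macbeath's triple $(A,B,C)\in E(\al,\be,\ga)$ generates only a subfield subgroup. Your plan to ``lean on the cited results of Marion'' would fail for the statement as claimed. Marion's theorems are asymptotic --- they give surjectivity of a random homomorphism with probability tending to $1$ as $q\to\infty$, for a \emph{fixed} hyperbolic triple of \emph{primes} --- so they cannot yield the result for every $q\geq 7$, and they are not even applicable to your setup: your candidate types, with entries dividing $(q\pm1)/d$, vary with $q$ and are generally not prime. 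In the paper, Marion's results are invoked only for the other families in Theorem~\ref{thm.unmixed.low.rank}, precisely where nothing stronger than ``$q$ large enough'' is claimed; Theorem~\ref{thm.unmixed.PSL} needs a non-asymptotic argument.

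The missing idea is elementary: take the element orders to be \emph{exactly} $(q-1)/d$ and $(q+1)/d$, the full torus orders, not arbitrary divisors. If $\al$ is the trace of an element of exact order $r=(q-1)/d$ with $r>5$, then $(\al,\al,\al)$ is not small, and it is not singular since the singularity equation $3\al^2-\al^3=4$ factors as $(\al-2)^2(\al+1)=0$ (respectively $\al^2(\al+1)=0$ in characteristic $2$), which is impossible for such $\al$; hence Macbeath's Theorem~4 shows $\langle A,B\rangle$ maps onto a subfield subgroup of $\PSL(2,q)$. But no \emph{proper} subfield subgroup contains an element of order $(q-1)/d$: element orders in $\PSL(2,p^f)$ or $\PGL(2,p^f)$ over a proper subfield are at most $p^f+1\leq \sqrt{q}+1$, which is too small once $q\geq 13$ (odd) or $q\geq 8$ (even). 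So the image is all of $\PSL(2,q)$, with no appeal to Marion needed. With this fix the rest of your argument closes as you describe: $(q-1)/d$ and $(q+1)/d$ are automatically coprime (for odd $q$ they are consecutive-difference-one integers; for even $q$ both $q-1$ and $q+1$ are odd), so nontrivial powers of elements from the two triples have distinct orders, giving $\Sigma(T_1)\cap\Sigma(T_2)=\{1\}$, and the remaining small cases $q=7,9,11$ must be checked directly, exactly as the paper does.
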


Its following Corollary is analogous to Theorem~\ref{thm.unmixed.An}
for the family of groups $\{\PSL(2,p)\}_{p \ prime}$.

\begin{cor}\label{cor.PSL.types}
Let $r,s>5$ be two relatively prime integers. Then there are
infinitely many primes $p$ for which the group $\PSL(2,p)$ admits an
unmixed Beauville structure $(x_1, y_1; x_2, y_2)$ where $(x_1,y_1,
(x_1y_1)^{-1})$ has type $(r,r,r)$ and $(x_2, y_2, (x_2y_2)^{-1})$
has type $(s,s,s)$.
\end{cor}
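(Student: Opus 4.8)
The plan is to choose the prime $p$ so that the two triangle types live in arithmetically disjoint parts of $\PSL(2,p)$: elements of order $r$ will be \emph{split} (diagonalizable over $\mathbb{F}_p$, hence of order dividing $(p-1)/2$), while elements of order $s$ will be \emph{non-split} (diagonalizable only over $\mathbb{F}_{p^2}$, hence of order dividing $(p+1)/2$). Since $(p-1)/2$ and $(p+1)/2$ are consecutive integers and therefore coprime, any common nontrivial element of $\Sigma(T_1)$ and $\Sigma(T_2)$ would have order dividing $\gcd((p-1)/2,(p+1)/2)=1$, a contradiction; thus condition (iii) of Definition~\ref{prop.beau} holds for free. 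The whole difficulty is therefore pushed into producing, for infinitely many $p$, two \emph{generating} triples of the prescribed types with the prescribed splitting behaviour.

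First I would pin down the primes. Using Dirichlet's theorem on primes in arithmetic progressions, I would select infinitely many primes $p$ satisfying
\[
p \equiv 1 \pmod{2r}, \qquad p \equiv -1 \pmod{2s}.
\]
Here coprimality of $r$ and $s$ is exactly what makes these congruences compatible: modulo $\gcd(r,s)=1$ there is no clash between $p\equiv 1$ and $p\equiv -1$, so the Chinese Remainder Theorem yields a single residue class modulo $2rs$, and Dirichlet supplies infinitely many primes in it. For such $p$ one has $2r\mid p-1$ and $2s\mid p+1$, so $\mathbb{F}_p^*$ contains a primitive $2r$-th root of unity $\lambda$, and the norm-one subgroup of $\mathbb{F}_{p^2}^*$ contains a primitive $2s$-th root of unity $\mu$. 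Consequently $\PSL(2,p)$ has split elements of order $r$ (with $\SL$-trace $\lambda+\lambda^{-1}\in\mathbb{F}_p$) and non-split elements of order $s$ (with $\SL$-trace $\mu+\mu^{-1}=\mu+\mu^{p}\in\mathbb{F}_p$).

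Next I would build the two triples. By Macbeath's theorem a pair $(x,y)\in\PSL(2,p)^2$ realizing any prescribed triple of traces $(\tr x,\tr y,\tr(xy))$ exists; I would take all three traces of the first triple equal to $\lambda+\lambda^{-1}$ and all three of the second equal to $\mu+\mu^{-1}$, yielding types $(r,r,r)$ and $(s,s,s)$. It remains to force each pair to generate $G=\PSL(2,p)$. By Dickson's classification the proper subgroups of $\PSL(2,p)$ are cyclic, dihedral, $A_4$, $S_4$ or $A_5$. Since $r,s>5$, none of $A_4$, $S_4$, $A_5$ contains an element of order $r$ or $s$, and a dihedral group cannot be generated by two elements of order $>2$; the cyclic (reducible) case corresponds precisely to a singular Macbeath triple. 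Hence it suffices to choose the trace data away from the singular/commutator locus, which for $p$ large—and we have infinitely many $p$—can be arranged, establishing conditions (i) and (ii).

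Finally, condition (iii) follows from the splitting dichotomy: every nontrivial element of $\Sigma(T_1)$ is a conjugate of a power of a split element, hence itself split of order dividing $(p-1)/2$, while every nontrivial element of $\Sigma(T_2)$ is non-split of order dividing $(p+1)/2$; coprimality of these two integers forces $\Sigma(T_1)\cap\Sigma(T_2)=\{1\}$. The main obstacle I expect is the third step—guaranteeing that one can simultaneously prescribe all three element orders \emph{and} retain surjectivity onto $\PSL(2,p)$, i.e.\ controlling Macbeath's exceptional and singular triples; the order hypothesis $r,s>5$ is precisely what neutralizes the small exceptional subgroups, while the abundance of primes from Dirichlet lets us discard the finitely many degenerate trace configurations.
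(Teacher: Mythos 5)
Your proposal is correct and is essentially the paper's own argument: the same CRT-plus-Dirichlet choice of primes (the paper solves $x \equiv 1 \bmod 2r$, $x \equiv -1 \bmod s$ after assuming WLOG that $s$ is odd, giving $r \mid \frac{p-1}{2}$ and $s \mid \frac{p+1}{2}$), followed by Macbeath's trace theorems to produce the two generating triples (this is exactly Corollary~\ref{cor.Beau.PSL2p}, resting on the proof of Theorem~\ref{thm.unmixed.PSL}), and coprimality of the element orders for condition \textit{(iii)}. Two small repairs to your hedged step: the non-singularity needs no largeness of $p$ and no discarding of trace configurations, since $(\al,\al,\al)$ is singular iff $3\al^2-\al^3=4$, i.e.\ $(\al-2)^2(\al+1)=0$, forcing $\al \in \{2,-1\}$ and hence order $p$, $1$ or $3$, which $r,s>5$ excludes outright; and your Dickson list omits the Borel (Frobenius) subgroups of $\PSL(2,p)$, but these are precisely Macbeath's structural subgroups and are ruled out by that same non-singularity check (equivalently, apply Theorem~\ref{thm.PSL.gen} directly: non-singular and non-small forces a subfield subgroup, and for $q=p$ prime the only subfield subgroup is $\PSL(2,p)$ itself).
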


This Theorem and its Corollary are proved in Section~\ref{sect.PSL}.
The proof is based on properties of the groups $\PSL(2,q)$ and on
results of Macbeath~\cite{Ma}.

Moreover, one can generalize Theorem~\ref{thm.unmixed.PSL}, and prove similar
results regarding some other families of finite simple groups of Lie
type of low Lie rank, provided that their defining field is large
enough.

\begin{theo}\label{thm.unmixed.low.rank}
The following finite simple groups of Lie type $G=G(q)$ admit an
unmixed Beauville structure, provided that $q$ is large enough.
\begin{enumerate}
\item Suzuki groups, $G=\Sz(q)=^2\!B_2(q)$, where $q=2^{2e+1}$;
\item Ree groups, $G=^2\!G_2(q)$, where $q=3^{2e+1}$;
\item $G=G_2(q)$, where $q=p^e$ for some prime number $p>3$;
\item $G=^3\!D_4(q)$, where $q=p^e$ for some prime number $p>3$;
\item $G=\PSL(3,q)$, where $q=p^e$ for some prime $p$;
\item $G=\PSU(3,q)$, where $q=p^e$ for some prime $p$.
\end{enumerate}
\end{theo}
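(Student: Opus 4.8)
The plan is to reduce the construction of an unmixed Beauville structure to a purely arithmetic condition on element orders, and then to realise that condition family by family using the known structure of maximal tori together with Marion's generation results. Condition \textit{(i)} of Definition~\ref{prop.beau} is automatic once we set $z_i=(x_iy_i)^{-1}$, so the task is to produce two triples $T_1=(x_1,y_1,z_1)$ and $T_2=(x_2,y_2,z_2)$ each generating $G=G(q)$ (condition \textit{(ii)}) and satisfying the disjointness condition \textit{(iii)}. The key observation is that condition \textit{(iii)} follows from a coprimality statement: every element of $\Sigma(T_i)$ is conjugate to a power of $x_i$, $y_i$ or $z_i$, hence its order divides one of $\ord(x_i),\ord(y_i),\ord(z_i)$. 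Consequently, if the three orders occurring in $T_1$ are coprime to the three orders occurring in $T_2$, then any $g\in\Sigma(T_1)\cap\Sigma(T_2)$ has order dividing a common factor of the two types, forcing $\ord(g)=1$ and hence $g=1$. Thus it suffices to exhibit two generating triples whose types are pairwise coprime across the two triples.

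The next step is to locate such coprime generating triples inside each $G(q)$. For a finite simple group of Lie type the orders of semisimple elements are governed by the orders of its maximal tori, which up to bounded factors are products of cyclotomic values $\Phi_d(q)$; for distinct $d$ these values have greatest common divisor bounded independently of $q$. I would therefore select, for $q$ large enough, two maximal tori $T^{(1)},T^{(2)}$ whose orders are supported on disjoint sets of cyclotomic factors, and choose regular semisimple elements generating suitable triples from these tori so that the resulting types $\tau_1$ and $\tau_2$ are coprime. For instance, in $\Sz(q)$ with $q=2^{2e+1}$ the three nontrivial torus orders $q-1$, $q+\sqrt{2q}+1$ and $q-\sqrt{2q}+1$ are pairwise coprime, giving ample room; the groups $^2\!G_2(q)$, $G_2(q)$, $^3\!D_4(q)$, $\PSL(3,q)$ and $\PSU(3,q)$ admit analogous families of tori whose orders are built from $\Phi_1(q),\Phi_2(q),\Phi_3(q),\Phi_6(q)$ and related factors. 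With the orders chosen large, the types are automatically hyperbolic, since $\tfrac1r+\tfrac1s+\tfrac1t<1$ for all sufficiently large $r,s,t$. That the chosen types actually yield generating triples — i.e. that the associated triangle groups surject onto $G(q)$ for large $q$ — is precisely the content of Marion's theorems on low-rank groups of Lie type, which I would invoke directly for each family.

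The main obstacle will be the verification, family by family, that Marion's hypotheses are met for types chosen to satisfy the coprimality requirement. Each of the six families has its own maximal torus structure and its own admissibility conditions on $(r,s,t)$ under which the triangle group $\Delta(r,s,t)$ surjects onto $G(q)$, so one cannot treat them uniformly; some care is needed to ensure that the coprime orders selected in the second step lie in the admissible range and avoid the finitely many small primes dividing the bounded gcd's of the cyclotomic factors. The bulk of the work is thus a careful case analysis confirming, for each group in the list and for $q$ sufficiently large, the simultaneous existence of two generating triples of mutually coprime types; once this is in hand, the reduction in the first paragraph delivers condition \textit{(iii)} and hence the unmixed Beauville structure.
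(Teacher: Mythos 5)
Your reduction of condition \textit{(iii)} of Definition~\ref{prop.beau} to cross-coprimality of the two types is exactly the argument the paper uses (implicitly), and invoking Marion for generation is also the paper's route. But your realisation step contains a genuine gap: Marion's results, as quoted in Theorem~\ref{thm.low.rank}, apply only to hyperbolic triples of \emph{primes} $(p_1,p_2,p_3)$ with $\mathrm{lcm}(p_1,p_2,p_3)$ dividing $|G|$, together with family-specific exclusions (e.g.\ for $G_2(q)$ the triples $(2,5,5),(3,3,5),(3,5,5),(5,5,5)$ are excluded; for $^3\!D_4(q)$ the primes must be distinct with $\{p_1,p_2\}\neq\{2,3\}$; for $\PSL(3,q)$ and $\PSU(3,q)$ the primes must be odd). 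The types you propose to use --- full maximal torus orders such as $q-1$ and $q\pm\sqrt{2q}+1$ in $\Sz(q)$ --- are composite and grow with $q$, so Marion's theorems cannot be ``invoked directly'' for them; no generation theorem for triangle groups of such unbounded composite types in these families is available in the sources the paper relies on. (Contrast the $\PSL(2,q)$ case, where Macbeath's trace machinery genuinely handles orders like $(q\pm 1)/2$; nothing analogous is at hand here.) Moreover, the selection of regular semisimple elements in tori does no work in your scheme: once a type is admissible for Marion, his theorem itself supplies the generating triples, and no elements need to be constructed by hand.

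The repair is to run your coprimality scheme with small \emph{fixed} primes rather than torus orders, and this is precisely what the paper does: from the factorisation of $|G|$ into cyclotomic factors it extracts, for each family, enough distinct primes dividing $|G|$ to form two prime hyperbolic triples with disjoint prime supports that also avoid Marion's excluded cases --- for instance $((5,5,5),(r,r,r))$ for $\Sz(q)$ using $5\mid q^2+1$, $((7,7,7),(r,r,r))$ for $^2\!G_2(q)$ using $7\mid q^3+1$, triples $((r,r,r),(s,s,s))$ of distinct primes $r,s\geq 7$ (resp.\ odd primes $>3$) for $G_2(q)$ (resp.\ $\PSL(3,q)$, $\PSU(3,q)$), and $((2,p_3,p_5),(3,p_4,p_6))$ with six distinct primes for $^3\!D_4(q)$. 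Your third paragraph defers exactly this verification, but it is where the entire content of the proof lives; once it is carried out with primes, your coprimality reduction does complete the argument. Note finally that primality buys a point you would otherwise have to address: a surjection $\Delta(p_1,p_2,p_3)\to G$ onto a nonabelian simple group cannot kill a generator $c_i$ (else $G$ would be cyclic), so the images have exact orders $p_i$, whereas for composite types the images could have orders that are proper divisors, and the claimed types would need separate justification.
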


This Theorem is proved in Section~\ref{sect.Lie}, and the proof is
based on recent probabilistic group theoretical results of
Marion~\cite{Mar3.09,Mar9.09}, who investigated the possible
surjection of certain triangle groups onto finite simple groups of
Lie type of low Lie rank. The probabilistic group-theoretical approach was further used and generalized
in \cite{GLL}.


Moreover, in the same direction of
Theorems~\ref{thm.unmixed.An} and~\ref{thm.unmixed.Sn}, and inspired
by conjectures of Liebeck and Shalev \cite{LS05} (see also
Section~\ref{sect.conj}), we propose the following Conjecture.

\begin{conj} \label{conj.Lie.gps}
Let $(r_1, s_1, t_1), (r_2, s_2, t_2)$ be two hyperbolic types. If
$G$ is a finite simple classical group of Lie type of Lie rank large
enough, then it admits an unmixed Beauville structure $(x_1, y_1;
x_2, y_2)$, where $(x_1,y_1,(x_1y_1)^{-1})$ has type $(r_1, s_1,
t_1)$ and $(x_2, y_2, (x_2y_2)^{-1})$ has type $(r_2, s_2, t_2)$.
\end{conj}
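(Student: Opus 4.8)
The plan is to follow the probabilistic strategy that proves Theorems~\ref{thm.unmixed.An} and~\ref{thm.unmixed.Sn}, replacing the estimates of Liebeck and Shalev \cite{LS04} for alternating groups by their (at present conjectural) analogues for classical groups \cite{LS05}. Fix the two hyperbolic types $\tau_1=(r_1,s_1,t_1)$ and $\tau_2=(r_2,s_2,t_2)$. For a finite simple classical group $G$ of Lie rank $n$, the first task is to exhibit, for each order occurring in $\tau_1$ or $\tau_2$, a conjugacy class of elements of exactly that order; once $n$ is large this is routine, since any fixed order is realised by a suitable block-diagonal (semisimple or mixed) element, and there is considerable freedom in the choice of such a class.

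Next I would count generating triples of each type via the Frobenius character formula: for conjugacy classes $C_1,C_2,C_3$ the number of triples $(x,y,z)$ with $xyz=1$ and $x\in C_1,\ y\in C_2,\ z\in C_3$ equals
\[
\frac{|C_1|\,|C_2|\,|C_3|}{|G|}\sum_{\chi\in\mathrm{Irr}(G)}\frac{\chi(x)\chi(y)\chi(z)}{\chi(1)},
\]
whose leading term is the contribution $\tfrac{|C_1||C_2||C_3|}{|G|}$ of the trivial character. The heart of the argument is to bound the remaining character ratios $|\chi(x)/\chi(1)|$ uniformly over nontrivial $\chi$ and over all classical $G$ of large rank, so that this count is $(1+o(1))\tfrac{|C_1||C_2||C_3|}{|G|}$; one then subtracts the non-generating triples by showing, again through such bounds, that the triples landing in any maximal subgroup of $G$ contribute only a lower-order term. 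These are exactly the estimates predicted by the conjectures of Liebeck and Shalev in \cite{LS05}.

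The genuinely new ingredient, beyond the surjectivity of the two triangle groups onto $G$, is condition~\textit{(iii)} of Definition~\ref{prop.beau}. Here I would argue that among the many pairs $(T_1,T_2)$ of generating triples of types $\tau_1,\tau_2$, only a vanishing proportion violate $\Sigma(T_1)\cap\Sigma(T_2)=\{1\}$. Every element of $\Sigma(T_i)\setminus\{1\}$ is conjugate to a nontrivial power of $x_i,y_i$ or $z_i$, so $\Sigma(T_i)$ meets at most $r_i+s_i+t_i$ conjugacy classes, a number independent of $G$. For each such nontrivial class $C$ one estimates the probability that $C$ is hit simultaneously by both triples and sums over the boundedly many relevant $C$; the freedom in choosing the classes (arranging, as in the split/non-split dichotomy used for $\PSL(2,q)$, that the two triples are supported on essentially disjoint families of tori) should make this total $o(1)$, so that a positive proportion of pairs of generating triples of the prescribed types yields a genuine unmixed Beauville structure.

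The main obstacle is the uniform character-ratio estimate for classical groups of unbounded rank with elements of fixed (small) order: this is precisely the content of the open conjectures of Liebeck and Shalev, and is the reason the statement is posed as Conjecture~\ref{conj.Lie.gps} rather than a theorem. A secondary difficulty is that prescribing the orders while still controlling $\Sigma(T_i)$ forces a delicate choice of classes, so that the disjointness in~\textit{(iii)} can be made generic rather than merely possible; verifying this simultaneously for every infinite family of classical groups, uniformly in the rank, is where the bulk of the technical work would lie.
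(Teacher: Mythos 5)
You should first note that the statement you are proving is Conjecture~\ref{conj.Lie.gps}: the paper does not prove it, so there is no proof of record to compare against. What the paper does provide, in \S\ref{sect.conj}, is a short observation that the Liebeck--Shalev conjecture of \cite{LS05} would immediately yield a \emph{weaker} statement: the existence of \emph{some} unmixed Beauville structure for classical groups of large rank, obtained by choosing two distinct primes $s,t>3$ and the types $((s,s,s),(t,t,t))$. The point of that choice is that condition~\textit{(iii)} of Definition~\ref{prop.beau} comes for free: nontrivial powers of elements of coprime orders can never be conjugate, exactly as in the proof of Theorem~\ref{thm.unmixed.PSL}. Your outline aims at the full conjecture with \emph{arbitrary} prescribed hyperbolic types, where that coprimality trick is unavailable, and you correctly identify that something like the $A_n$ strategy (Theorem~\ref{thm.conj.An} together with Algorithm~\ref{alg.conj.class}) would have to be transplanted to classical groups. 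In that respect your sketch goes beyond what the paper records, and is the natural research program.

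Measured as a proof, however, your proposal has two genuine gaps beyond the openness of the Liebeck--Shalev conjecture, and they should be stated as such rather than folded into ``the bulk of the technical work.'' First, the conjecture in \cite{LS05} concerns a random homomorphism from a fixed Fuchsian group, with no prescription of conjugacy classes or exact orders; what your Frobenius-formula count requires is the classical-group analogue of \cite[Theorem 1.9]{LS04}, i.e.\ random generation by triples from \emph{prescribed} classes $C_1,C_2,C_3$ with product $1$, uniform character-ratio bounds included. That is a strictly stronger, and equally open, statement — not a corollary of the conjecture you cite. Second, your treatment of condition~\textit{(iii)} asserts that only a vanishing proportion of pairs of generating triples violates $\Sigma(T_1)\cap\Sigma(T_2)=\{1\}$, but disjointness is not a generic property of pairs once the orders $r_1,\dots,t_2$ share prime divisors: if the classes are chosen so that some power of $x_1$ and some power of $x_2$ lie in the same class, then \emph{every} pair of triples from those classes fails, with probability $1$ rather than $o(1)$. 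What is needed — and what the paper's Algorithm~\ref{alg.conj.class} supplies in the $A_n$ case via fixed-point counts, an invariant constant on all nontrivial powers of an almost homogeneous element — is a deterministic choice of six classes whose nontrivial powers pairwise avoid one another's classes. For classical groups of unbounded rank the natural candidate is an eigenvalue-multiplicity or fixed-space invariant for semisimple elements, but constructing it uniformly in the rank, simultaneously for all classical families, is a missing step, not a routine one. With those two ingredients made precise your plan is a plausible route to the conjecture; as written it is a conditional outline, which is precisely why the paper states the result as a conjecture.
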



This paper is organized as follows. In Section~\ref{sect.geom} we
shall present the geometrical background and explain the link
between geometry and group theory. In Section~\ref{sect.ram} one can
find the proofs of the main results.

\begin{rem}
After completing this manuscript, it was brought to our attention
that Fuertes and Jones~\cite{FJ}, have independently and
simultaneously constructed unmixed Beauville structures for the
groups $\PSL(2,q)$, the Suzuki groups $G=\Sz(q)=^2\!B_2(q)$ and the
Ree groups $G=^2\!G_2(q)$, thus proving some of our results
appearing in Theorems~\ref{thm.unmixed.PSL} and
\ref{thm.unmixed.low.rank}. However, their constructions are of
different type.

\end{rem}

\section{Geometrical Background on Ramification
Structures}\label{sect.geom}

We shall denote by $S$ a smooth irreducible complex projective
surface of general type. We shall also use the standard notation in
surface theory, hence we denote by $p_g:=h^0(S,\Omega^2_S)$ the
\emph{geometric genus} of $S$, $q:=h^0(S,\Omega^1_S)$ the
\emph{irregularity} of $S$, $\chi(S)=1+p_g-q$ the \emph{holomorphic
Euler-Poincar\'{e} characteristic}, $e(S)$ the \emph{topological
Euler number}, and $K^2_S$ the \emph{self-intersection of the
canonical divisor} (see e.g.
\cite{BHPV}). In this section, $C$ will always denote a smooth
compact complex curve and $g(C)$ will be its \emph{genus}.

\begin{defin} A surface $S$ is said to be
\emph{isogenous to a higher product of curves} if and only if $S$ is a
quotient $(C_1 \times C_2)/G$, where $C_1$ and $C_2$ are curves of
genus at least two, and $G$ is a finite group acting freely on
$C_1 \times C_2$.
\end{defin}
In \cite{Cat00} it is proven that any surface isogenous to a higher
product has a unique minimal realization as a quotient $(C_1 \times
C_2)/G$, where $G$ is a finite group acting freely and with the
property that no element acts trivially on one of the two factors
$C_i$. From now on we shall work only with minimal realization.

We have two cases: the \emph{mixed} case where the action of $G$
exchanges the two factors (and then $C_1$ and $C_2$ are
isomorphic), and the \emph{unmixed} case where $G$ acts diagonally
on their product.

A surface $S$ isogenous to a higher product is in particular a minimal surface
of general type and it has
\begin{equation*}\label{eq.chie}
K^2_S=8\chi(S), \ 4\chi(S)=e(S), \text{ and }
\end{equation*}
\begin{equation}\label{eq.chi} \chi(S)=\frac{(g(C_1)-1)(g(C_2)-1))}{|G|},
\end{equation}
by Theorem 3.4 of \cite{Cat00}. Moreover, by Serrano
\cite[Proposition 2.2]{Serr},
\begin{equation}\label{eq.irregularity} q(S)=g(C_1/G)+g(C_2/G),
\end{equation}
see also \cite{Cat00} paragraph 3.

A special case of surfaces isogenous to a higher product is given by
Beauville surfaces, which were also defined in \cite{Cat00}.
\begin{defin} A \emph{Beauville surface} is a surface isogenous to a
higher product $S=(C_1 \times C_2)/G$,
which is rigid, i.e., it has no nontrivial deformation.
\end{defin}

\begin{rem}
Every Beauville surface of mixed type has an unramified double
covering which is a Beauville surface of unmixed type.

The rigidity property of the Beauville surfaces is equivalent to the
fact that $C_i/G \cong \mathbb{P}^1$ and that the projections $C_i
\rightarrow C_i/G \cong \mathbb{P}^1$ are branched in three points,
for $i=1,2$. Moreover, by Equation~\eqref{eq.irregularity} one has
$q(S)=0$.
\end{rem}
In the following we shall consider only the \emph{unmixed} case.

From the above Remark one can see that studying Beauville surfaces
(as well as surfaces isogenous to a higher product in general) is
strictly linked to the study of branched covering of complex curves.
We shall recall Riemann's Existence Theorem which translates the
geometric problem of constructing branch covering into a group
theoretical problem. We state it first in great generality.

\begin{defin}\label{defn.orbifold.fuchsian}
Let $g'$ be a non negative integer and $m_1, \dots , m_r$ be
positive integers with $m_i \geq 2$ for all $i$. An \emph{orbifold
surface group} of type $(g' \mid m_1, \dots , m_r)$ is a group
presented as follows:
\begin{multline*}  \Gamma(g' \mid m_1, \dots , m_r):=\langle a_1,b_1, \ldots , a_{g'},b_{g'},c_{1},
\ldots , c_{r} | \\
c^{m_1}_{1}=\dots=c^{m_r}_{r}=\prod_{k=1}^{g'} [a_{k},b_{k}]c_{1}
\cdot \ldots \cdot c_{r} =1 \rangle.
\end{multline*}
If $g'=0$ it is called a \emph{polygonal group}, if $g'=0$ and
$r=3$ it is called a \emph{triangle group}.
\end{defin}
We remark that an orbifold surface group is in particular a
\emph{Fuchsian group} (see e.g. \cite{LS04}).

The following is a reformulation of {\em Riemann's Existence Theorem}:

\begin{theo} A finite group $G$ acts as a group of automorphisms on some
compact Riemann surface $C$ of genus $g$ if and only if there are natural numbers
$g', m_1, \ldots , m_r$, and an orbifold homomorphism
$$\theta \colon \Gamma(g' \mid m_1, \dots , m_r)
\rightarrow G$$  such that
$\ord(\theta(c_i))=m_i \ \forall i$ and such that the Riemann - Hurwitz relation holds:
\begin{equation}\label{eq.RiemHurw}
2g - 2 = |G|\left(2g'-2 + \sum_{i=1}^r \left(1 - \frac{1}{m_i}\right)\right).
\end{equation}

\end{theo}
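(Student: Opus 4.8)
The plan is to prove the two implications separately, the forward direction being a fairly direct consequence of quotient geometry and the reverse direction being the substantive existence statement. Throughout I would move freely between a finite branched $G$-covering $\pi\colon C\to C'$ and the monodromy of its unramified restriction, which is the standard dictionary underlying Riemann's Existence Theorem.

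For the forward implication, suppose $G$ acts as a group of automorphisms on $C$. I would first form the quotient $C':=C/G$, which is again a compact Riemann surface, say of genus $g'$, and observe that $\pi\colon C\to C'$ is a holomorphic branched covering of degree $|G|$, branched over finitely many points $p_1,\dots,p_r\in C'$. Since $G$ permutes each fibre transitively, all points above $p_i$ share a common ramification index $m_i$, and the stabilizer of such a point is cyclic of order $m_i$. Restricting to $U:=C'\setminus\{p_1,\dots,p_r\}$ yields an unramified Galois covering $\pi^{-1}(U)\to U$ with deck group $G$, classified by a surjection of $\pi_1(U)$ onto $G$. Using the standard one-relator presentation of $\pi_1(U)$ on generators $a_1,b_1,\dots,a_{g'},b_{g'},c_1,\dots,c_r$ --- where $c_j$ is a small loop around $p_j$ --- together with the fact that the local monodromy $c_j$ maps to an element of order exactly $m_j$, this surjection factors through $\Gamma(g'\mid m_1,\dots,m_r)$, giving the desired orbifold homomorphism $\theta$ with $\ord(\theta(c_j))=m_j$. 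The Riemann--Hurwitz formula applied to $\pi$ is then precisely relation~\eqref{eq.RiemHurw}.

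For the reverse implication, given an orbifold homomorphism $\theta$ with $\ord(\theta(c_i))=m_i$, I would invoke uniformization of the orbifold of type $(g'\mid m_1,\dots,m_r)$. In the range relevant here, where $g\geq 2$ forces the right-hand side of~\eqref{eq.RiemHurw} to be positive, the orbifold is hyperbolic, so $\Gamma:=\Gamma(g'\mid m_1,\dots,m_r)$ is realized as a cocompact Fuchsian group acting on the upper half-plane $\mathbb{H}$ with quotient orbifold $C'$ carrying cone points of orders $m_i$ (the finitely many Euclidean and spherical exceptions are treated by uniformizing onto $\mathbb{C}$ or $\mathbb{P}^1$ instead). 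Setting $K:=\ker\theta$, the crucial point is that every torsion element of $\Gamma$ is conjugate to a power of some $c_i$; since $\ord(\theta(c_i))=m_i$, no nontrivial power of any $c_i$ lies in $K$, so $K$ is torsion-free and hence acts freely on $\mathbb{H}$. Therefore $C:=\mathbb{H}/K$ is a smooth compact Riemann surface on which $\Gamma/K\cong\mathrm{im}(\theta)$ --- equal to $G$ when $\theta$ is surjective --- acts as a group of automorphisms with quotient $C'$. Comparing hyperbolic areas via $\mathrm{area}(C)=|G|\cdot\mathrm{area}(C')$ recovers the genus $g$ by Gauss--Bonnet, which is again~\eqref{eq.RiemHurw}.

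I expect the reverse implication to be the main obstacle, and within it the essential input is the uniformization step together with the verification that $K$ is torsion-free: the hypothesis $\ord(\theta(c_i))=m_i$ is exactly what prevents the covering surface from acquiring cone points and guarantees that $\mathbb{H}/K$ is a manifold rather than an orbifold. An alternative, equally valid route avoids uniformization: one builds the unramified covering of $U$ directly from the homomorphism $\pi_1(U)\to G$ induced by $\theta$, endows it with its canonical complex structure, and then fills in the punctures using that the local monodromy around each $p_i$ has finite order $m_i$. The nontrivial content there is precisely the analytic core of Riemann's Existence Theorem, namely that the resulting topological branched covering carries a unique compatible structure of a compact Riemann surface.
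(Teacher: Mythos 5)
Your proof is correct. Note that the paper offers no proof of this statement at all: it is presented as a classical reformulation of Riemann's Existence Theorem, so there is no argument of the paper's to compare against, and your sketch is precisely the standard one --- quotient geometry plus monodromy of the restricted unramified cover for the forward direction, and Fuchsian uniformization with a torsion-free kernel for the converse. You correctly isolate the essential point, namely that every torsion element of $\Gamma(g' \mid m_1,\dots,m_r)$ is conjugate to a power of some $c_i$, so the hypothesis $\ord(\theta(c_i))=m_i$ forces $K=\ker\theta$ to be torsion-free and $\mathbb{H}/K$ to be a smooth surface rather than an orbifold. Two small points are worth making explicit. First, ``orbifold homomorphism'' must be read as a surjective homomorphism (this is how the paper uses the theorem, e.g.\ in the definition of a spherical system of generators, which requires $\langle x_1,\dots,x_r\rangle=G$); as you observe, otherwise only $\mathrm{im}(\theta)$ acts. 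Second, in the Euclidean and spherical exceptions you defer, the same order condition does the work again: the non-developable (``bad'') orbifolds, namely the teardrop $(0\mid m)$ and the spindle $(0\mid m_1,m_2)$ with $m_1\neq m_2$, are automatically excluded, since in those orbifold groups the relations force $\ord(c_i)<m_i$, so a $\theta$ with $\ord(\theta(c_i))=m_i$ cannot exist; hence uniformization onto $\mathbb{C}$ or $\mathbb{P}^1$ genuinely applies in all remaining cases. In the forward direction you should also record why the point stabilizers are cyclic of order $m_i$ (local linearization of the action of the finite stabilizer at a fixed point), which is what makes $\theta(c_j)$ have order exactly $m_j$ rather than merely order dividing $m_j$.
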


If this is the case, then $g'$ is the genus of $C':=C/G$. The
$G$-cover $C \rightarrow
C'$ is branched in $r$ points $p_1, \ldots , p_r$ with branching
indices $m_1, \ldots ,
m_r$, respectively.

\medskip\noindent Moreover, if we denote by $x_i \in G$ the image
of $c_i$ under
$\theta$, then
$$
\Sigma(x_1, \ldots , x_r) := \cup_{a \in G} \cup_{i=0}^{\infty}
\{ax_1^ia^{-1}, \ldots ax_r^ia^{-1} \},
$$ is the set of stabilizers for the action of $G$ on $C$.

If we restrict ourselves to the case where the quotient curve is
isomorphic to $\mathbb{P}^1$ then the Theorem suggests the following definition.

\begin{defin}\label{defn.sphergen}
Let $G$ be a finite group and $r \in \mathbb{N}$ with $r \geq 2$.
\begin{itemize}
\item An $r-$tuple $T=(x_1,\ldots,x_r)$ of elements of $G$ is called a
\emph{spherical $r-$system of generators} of $G$ if $ \langle
x_1,\ldots,x_r \rangle=G$ and $x_1 \cdot \ldots \cdot x_r=1$.

\item We say that $T$ is of \emph{type} $\tau:=(m_1, \dots ,m_r)$ if
the orders of $(x_1,\dots,x_r)$ are respectively $(m_1, \dots
,m_r)$.

\item We shall denote:
\[ \mathcal{S}(G,\tau):=\{\textrm{spherical $r-$systems for } G \textrm{ of type } \tau\}.
\]

\item Moreover, two spherical $r_i-$systems $T_1=(x_1, \dots , x_{r_1})$ and $T_2=(x_1, \dots , x_{r_2})$
are said to be \emph{disjoint}, if:
\begin{equation}\label{eq.sigmasetcond} \Sigma(T_1)
\bigcap \Sigma(T_2)= \{ 1 \},
\end{equation}
where
\[ \Sigma(T_i):= \bigcup_{g \in G} \bigcup^{\infty}_{j=0} \bigcup^{r_i}_{k=1} g \cdot x^j_{i,k} \cdot
g^{-1}.
\]
\end{itemize}
\end{defin}

We obtain that the datum of a surface isogenous to a higher product
of unmixed type $S=(C_1 \times C_2)/G$ with $q=0$ is determined,
once we look at the monodromy of each covering of $\mathbb{P}^1$, by
the datum of a finite group $G$ together with two respective
disjoint spherical $r_i-$systems of generators $T_1:=(x_1, \dots ,
x_{r_1})$ and $T_2:=(x_1, \dots , x_{r_2})$, such that the types of
the systems satisfy~\eqref{eq.RiemHurw} with $g'=0$ and respectively
$g=g(C_i)$. The condition of being disjoint ensures that the action
of $G$ on the product of the two curves $C_1 \times C_2$ is free.
Observe that this can be specialized to $r_i=3$, and therefore
can be used to construct Beauville surfaces. This description
suggests the following definition.
\begin{defin} An \emph{unmixed ramification structure} of size $(r_1,r_2)$
for a finite group
$G$, is a pair $(T_1,T_2)$ of tuples $T_1:=(x_1, \dots x_{r_1})$,
$T_2:=(x_1, \dots x_{r_2})$ of elements of $G$,
such that $(T_1,T_2)$ is a disjoint
pair of spherical $r_i-$system of generators of $G$.
\end{defin}

The definition of an \emph{unmixed Beauville structure} given in the
introduction is a special case of the above definition for
$r_1=r_2=3$. Therefore, the problem of finding Beauville surfaces of
unmixed type is now translated into the problem of finding finite
groups $G$ which admit an unmixed Beauville structure.


\section{Ramification Structures for Finite Simple
Groups}\label{sect.ram}


\subsection{Ramification Structures for $A_n$ and
$S_n$}\label{sect.an}

In this section we prove Theorems~\ref{thm.unmixed.An} and
\ref{thm.unmixed.Sn}. The proofs are based on results of Liebeck and
Shalev~\cite{LS04}.

\subsubsection{Theoretical Background -- Higman's Conjecture and a
Theorem of Liebeck and Shalev on Fuchsian groups}

Conder~\cite{Co80} (following Higman) proved that sufficiently large
alternating groups are in fact \emph{Hurwitz groups}, namely they
are quotients of the Hurwitz triangle group $\Delta(2,3,7)$, using
the method of coset diagrams. In fact, Higman had already
conjectured in the late 1960s that every hyperbolic triangle group,
and more generally -- every Fuchsian group, surjects to all but
finitely many alternating groups.

This conjecture was proved by Everitt~\cite{Ev} using the method of
coset diagrams, and later Liebeck and Shalev~\cite{LS04} gave an
alternative proof based on probabilistic group theory. In fact, they
proved a more explicit and general result, which is presented below.

Note that the results of Liebeck and Shalev are applicable to any
Fuchsian group $\Ga$, however, we shall use them only for the case
of orbifold surface groups (see
Definition~\ref{defn.orbifold.fuchsian})
\[
    \Ga=\Ga(g' \mid m_1,\dots,m_r)
\]
that satisfy the inequality
\begin{equation}\label{eq.Fuchsian}
    2g'-2 + \sum_{i=1}^r \bigl( 1-\frac{1}{m_i}\bigr) > 0.
\end{equation}

\begin{defin}
Let $C_i = g_i^{S_n}$ ($1 \leq i \leq r$) be conjugacy classes in
$S_n$, and let $m_i$ be the order of $g_i$. Define $\sgn(C_i):=
\sgn(g_i)$, where $\sgn(g_i)$ is the sign of $g_i$. Moreover
define
\[
    \Hom_{\mathbf{C}}(\Gamma,S_n) = \{\phi\in \Hom(\Gamma,S_n): \phi(c_i) \in C_i
    \text{ for } 1 \leq i \leq r\},
\]
where $\mathbf{C} := (C_1,\dots,C_r)$.
\end{defin}

\begin{defin}
Conjugacy classes in $S_n$ of cycle-shape $(m^k)$, where $n = mk$,
namely, containing $k$ cycles of length $m$ each, are called
\emph{homogeneous}. A conjugacy class having cycle-shape
$(m^k,1^f)$, namely, containing $k$ cycles of length $m$ each and
$f$ fixed points, with $f$ bounded, is called \emph{almost
homogeneous}.
\end{defin}

\begin{theo}~\cite[Theorem 1.9]{LS04}. \label{thm.conj.An}
Let $\Gamma$ be a Fuchsian group, and let $C_i$ ($1\leq i \leq r$)
be conjugacy classes in $S_n$ with cycle-shapes
$(m_i^{k_i},1^{f_i})$, where $f_i<f$ for some constant $f$ and
$\prod_{i=1}^r \sgn(C_i) = 1$. Set $\mathbf{C} = (C_1,\dots,C_r)$.
Then the probability that a random homomorphism in
$\Hom_\mathbf{C}(\Gamma, S_n)$ has image containing $A_n$ tends to
$1$ as $n \rightarrow \infty$.
\end{theo}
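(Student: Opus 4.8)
The plan is to prove the statement by a character-theoretic counting argument: estimate the total size of $\Hom_{\mathbf{C}}(\Ga,S_n)$, and then show that the homomorphisms whose image fails to contain $A_n$ form a vanishingly small fraction of this total. First I would record the Frobenius--Mednykh formula for the number of homomorphisms from a genus-$g'$ orbifold surface group with prescribed boundary classes. Writing $g_i$ for a fixed representative of $C_i$, this reads
\[
|\Hom_{\mathbf{C}}(\Ga,S_n)| = |S_n|^{2g'-1}\prod_{i=1}^r |C_i| \sum_{\chi\in\mathrm{Irr}(S_n)} \frac{\prod_{i=1}^r \chi(g_i)}{\chi(1)^{2g'-2+r}}.
\]
The trivial character contributes $1$ to the inner sum, and the sign character contributes $\prod_i \sgn(g_i)=\prod_i\sgn(C_i)=1$ by hypothesis; since $S_n$ has no other linear characters, these two together give a main term equal to $2$.

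The second step is to show the remaining irreducible characters contribute only $o(1)$ to the inner sum. This is exactly where the cycle-shape hypothesis enters: because each $g_i$ has shape $(m_i^{k_i},1^{f_i})$ with $f_i$ bounded, the Fomin--Lulov type bounds give, up to a polynomial factor in $n$, the estimate $|\chi(g_i)|/\chi(1)\le \chi(1)^{-(1-1/m_i)+o(1)}$. Taking the product over $i$ and reinserting the factor $\chi(1)^{-(2g'-2)}$ bounds each summand by $\chi(1)^{-\mu+o(1)}$, where $\mu = 2g'-2+\sum_{i}(1-1/m_i)$ is the Fuchsian measure, which is strictly positive by inequality~\eqref{eq.Fuchsian}. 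I would then invoke the behaviour of the symmetric-group Witten zeta function: for each fixed $s>0$ one has $\sum_{\chi(1)>1}\chi(1)^{-s}\to 0$ as $n\to\infty$, because all but finitely many characters have very large degree. Applying this with $s=\mu$ collapses the tail and yields the asymptotic $|\Hom_{\mathbf{C}}(\Ga,S_n)| = (2+o(1))\,|S_n|^{2g'-1}\prod_i|C_i|$.

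The third and decisive step is the subgroup analysis. By the classification of maximal subgroups of $S_n$ (O'Nan--Scott), a homomorphism whose image misses $A_n$ factors through a maximal subgroup $M$ of one of three types: intransitive $M\le S_k\times S_{n-k}$, transitive imprimitive $M\le S_a\wr S_b$ with $ab=n$, or primitive not containing $A_n$. For each type I would bound $|\{\phi\in\Hom_{\mathbf{C}}(\Ga,S_n):\mathrm{im}(\phi)\le M\}|$, using the requirement $\phi(c_i)\in C_i\cap M$, and then sum over all such $M$. The primitive case is comparatively easy: a primitive group not containing $A_n$ has order at most $\exp(O(n^{1-\epsilon}))$, and there are few of them, so their total contribution is super-exponentially smaller than the main term. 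The main obstacle is the intransitive and imprimitive cases, where the subgroups are genuinely large and numerous; here one must again exploit that $f_i$ is bounded in order to control how an almost-homogeneous class of shape $(m_i^{k_i},1^{f_i})$ can distribute its cycles across the blocks, applying the character formula inside the Young subgroup $S_k\times S_{n-k}$ (respectively the wreath product) and summing the resulting estimates over all admissible $k$ (respectively $a,b$).

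Finally, combining the three steps gives that the number of homomorphisms with image inside some proper subgroup missing $A_n$ is $o\bigl(|\Hom_{\mathbf{C}}(\Ga,S_n)|\bigr)$, so the proportion of $\phi\in\Hom_{\mathbf{C}}(\Ga,S_n)$ whose image contains $A_n$ tends to $1$ as $n\to\infty$, which is the assertion. I expect the bulk of the genuine work to lie in the imprimitive and intransitive estimates of the third step, since the first two steps are essentially a character-sum computation once the Fomin--Lulov bounds and the zeta-function estimate are in hand.
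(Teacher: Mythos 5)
This theorem is quoted in the paper from Liebeck--Shalev \cite[Theorem 1.9]{LS04} without proof, so the relevant comparison is with the original argument there, and your outline reproduces it faithfully: the Frobenius-type character formula for $|\Hom_{\mathbf{C}}(\Gamma,S_n)|$ with the two linear characters supplying the main term, Fomin--Lulov-style bounds $|\chi(g_i)|\le \chi(1)^{1/m_i+o(1)}$ for almost homogeneous classes combined with the vanishing of $\sum_{\chi(1)>1}\chi(1)^{-\mu}$ for the Fuchsian measure $\mu>0$, and finally the maximal-subgroup analysis (intransitive, imprimitive, primitive) to show the homomorphisms with image missing $A_n$ are a negligible fraction. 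You also correctly identify where the hypotheses enter ($\prod_i\sgn(C_i)=1$ for the sign character's contribution, bounded $f_i$ for the character bounds and the intransitive/imprimitive estimates) and where the real technical work lies, so your proposal is essentially the same proof as the cited source.
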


Applying this when $\Gamma$ is the triangle group
$\Delta(m_1,m_2,m_3)$, Liebeck and Shalev~\cite{LS04} demonstrate
that three elements, with product $1$, from almost homogeneous
classes $C_1$, $C_2$, $C_3$ of orders $m_1$, $m_2$, $m_3$, randomly
generate $A_n$ or $S_n$, provided $1/m_1 + 1/m_2 + 1/m_3 < 1$. In
particular, when $(m_1,m_2,m_3) = (2, 3, 7)$, one can choose $C_1$,
$C_2$ and $C_3$ as conjugacy classes of even permutations and this
gives random $(2,3,7)$ generation of $A_n$.

Using Theorem~\ref{thm.conj.An}, Liebeck and Shalev deduced the
following Corollary regarding symmetric groups.

\begin{cor}\label{cor.conj.Sn} \cite[Theorem 1.10]{LS04}.
Let $\Gamma=\Gamma(0|m_1,\dots,m_r)$ be a polygonal group which
satisfies the above inequality~\eqref{eq.Fuchsian}, and assume
that at least two of $m_1,\dots,m_r$ are even. Then $\Gamma$
surjects to all but finitely many symmetric groups $S_n$.
\end{cor}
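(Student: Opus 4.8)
The plan is to deduce the result from Theorem~\ref{thm.conj.An} by choosing the conjugacy classes $\mathbf{C}=(C_1,\dots,C_r)$ cleverly. That theorem, applied with $\Gamma=\Gamma(-\mid m_1,\dots,m_r)$, produces (with probability tending to $1$) a homomorphism $\phi\in\Hom_{\mathbf{C}}(\Gamma,S_n)$ whose image \emph{contains} $A_n$; the whole difficulty is to upgrade ``contains $A_n$'' to ``equals $S_n$'', that is, to force an odd permutation into the image. The hypothesis that at least two of the $m_i$ are even is exactly what will let me do this while respecting the parity constraint $\prod_i\sgn(C_i)=1$, which is forced by the relation $c_1\cdots c_r=1$ in $\Gamma$ (the identity being an even permutation) and is also required as a hypothesis of Theorem~\ref{thm.conj.An}.

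Concretely, for an almost homogeneous class of cycle-shape $(m_i^{k_i},1^{f_i})$ one has $\sgn(C_i)=(-1)^{k_i(m_i-1)}$, so a class coming from an \emph{odd} $m_i$ is automatically even, whereas for an \emph{even} $m_i$ one gets $\sgn(C_i)=(-1)^{k_i}$, whose value I can dictate by prescribing the parity of $k_i$. First I would fix, for each $i$, a residue so that $n\equiv f_i\pmod{m_i}$ with $0\le f_i<m_i$; replacing a chosen $f_i$ by $f_i+m_i$ changes $k_i=(n-f_i)/m_i$ by one and hence flips its parity, all while keeping every $f_i$ bounded (below $2\max_i m_i$, a constant depending only on $\Gamma$), as Theorem~\ref{thm.conj.An} requires. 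Using the two even values among the $m_i$, I would arrange that exactly those two classes consist of odd permutations and that every other class is even; then $\prod_i\sgn(C_i)=(-1)^2=1$ and the hypotheses of Theorem~\ref{thm.conj.An} are met.

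With these classes fixed, Theorem~\ref{thm.conj.An} guarantees that for all sufficiently large $n$ there exists $\phi\in\Hom_{\mathbf{C}}(\Gamma,S_n)$ with image containing $A_n$. Since $\phi(c_i)\in C_i$ and at least one $C_i$ consists of odd permutations, the image also contains an odd permutation, and therefore equals $S_n$; thus $\phi$ is the desired surjection. As this works for every large enough $n$, the group $\Gamma$ surjects to all but finitely many symmetric groups $S_n$.

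The only genuinely delicate point, and the main obstacle, is the combinatorial bookkeeping of the signs: one must check that the parities of the $k_i$ can simultaneously be set so as to give product $1$ with at least one odd factor, using only the even-order classes, for every large $n$, and that the resulting $f_i$ stay uniformly bounded so that the classes are genuinely almost homogeneous in the sense of Theorem~\ref{thm.conj.An}. Once this parity analysis is organized as above, the rest is an immediate application of the $A_n$ statement.
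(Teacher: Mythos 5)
Your proposal is correct and follows exactly the route the paper indicates: the statement is quoted from Liebeck--Shalev \cite[Theorem 1.10]{LS04}, which is deduced from Theorem~\ref{thm.conj.An} precisely by choosing almost homogeneous classes $(m_i^{k_i},1^{f_i})$ with bounded $f_i$, adjusting the parities of the $k_i$ so that exactly two classes (coming from even $m_i$) are odd, whence $\prod_i\sgn(C_i)=1$ and the image, containing $A_n$ together with an odd permutation, is all of $S_n$. The same parity trick appears in the paper's own proof of Theorem~\ref{thm.unmixed.Sn}, where Algorithm~\ref{alg.conj.class} is modified so that two of the three classes in each triple consist of odd permutations.
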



\subsubsection{Beauville Structures and Ramification Structures for $A_n$ and $S_n$}

\begin{proof}[Proof of Theorem~\ref{thm.unmixed.An}]

Assume that $(r_1,s_1,t_1)$ and $(r_2,s_2,t_2)$ are two hyperbolic
types and that $n$ is large enough. By the following
Algorithm~\ref{alg.conj.class}, we choose six almost homogeneous
conjugacy classes in $S_n$, $C_{r_1}$, $C_{s_1}$, $C_{t_1}$,
$C_{r_2}$, $C_{s_2}$, $C_{t_2}$, of orders $r_1,s_1,t_1$,
$r_2,s_2,t_2$ respectively, such that they contain only even
permutations, and they all have different numbers of fixed points.

By Theorem~\ref{thm.conj.An}, the probability that three random
elements $(x_1,y_1,z_1)$ (equivalently $(x_2,y_2,z_2)$) whose
product is $1$, taken from the almost homogeneous conjugacy classes
$(C_{r_1},C_{s_1},C_{t_1})$ (equivalently
$(C_{r_2},C_{s_2},C_{t_2})$) will generate $A_n$, tends to $1$ as $n
\rightarrow \infty$.

This implies that if $n$ is large enough, one can find six elements
$x_1,y_1,z_1$, $x_2,y_2,z_2$ in $A_n$ of orders $r_1,s_1,t_1$,
$r_2,s_2,t_2$ respectively satisfying the following properties.

\begin{itemize}
\item $x_1\in C_{r_1}$, $y_1\in C_{s_1}$, $z_1\in C_{t_1}$, $x_2\in C_{r_2}$, $y_2\in C_{s_2}$, $z_2\in
C_{t_2}$.
\item $x_1y_1z_1=x_2y_2z_2=1$ and $\langle x_1, y_1 \rangle = \langle x_2, y_2 \rangle =
A_n$.
\item For any choice of integers $l_{x_1},l_{y_1},l_{z_1}$,$l_{x_2},l_{y_2},l_{z_2}$, if the
six elements $x_1^{l_{x_1}}$, $y_1^{l_{y_1}}$, $z_1^{l_{z_1}}$,
$x_2^{l_{x_2}}$, $y_2^{l_{y_2}}$, $z_2^{l_{z_2}}$ are not trivial,
then they all belong to different conjugacy classes in $S_n$, since they
all have different numbers of fixed points, and
hence $\Sigma(x_1, y_1, z_1) \bigcap \Sigma(x_2, y_2, z_2) =
\{1_{A_n}\}$.
\end{itemize}

Therefore, if $n$ is large enough, the quadruple $(x_1, y_1; x_2,
y_2)$ is an unmixed Beauville structure for $A_n$, where $(x_1, y_1,
z_1)$ has type $(r_1, s_1, t_1)$ and $(x_2, y_2, z_2)$ has type
$(r_2, s_2, t_2)$.
\end{proof}

\begin{alg}\label{alg.conj.class}
{\em Choosing six almost homogeneous conjugacy classes $C_{r_1}$,
$C_{s_1}$, $C_{t_1}$, $C_{r_2}$, $C_{s_2}$, $C_{t_2}$ in $S_n$, of
orders $r_1,s_1,t_1,r_2,s_2,t_2$ respectively, such that they
contain only even permutations, and they all have different numbers
of fixed points. }

{\bf Step 1:} Sorting $r_1,s_1,t_1,r_2,s_2,t_2$.

Let $m_6\leq \dots \leq m_1$ be the sorted sequence whose elements
are exactly $r_1,s_1,t_1,r_2,s_2,t_2$. Since $n$ can be as large as
we want, we may assume that $n>100m_1$.

{\bf Step 2:} Choosing even integers $k'_i$ ($1 \leq i \leq 6$).

For $1 \leq i \leq 6$, let
\[
    k'_i =
    \begin{cases}
    \lfloor n/m_i\rfloor &\text{ if it is even}, \\
    \lfloor n/m_i\rfloor-1 &\text{ otherwise}.
    \end{cases}
\]

Observe that for $1 \leq i \leq 6$,
\[
     k'_im_i \leq n \leq (k'_i+2)m_i.
\]

{\bf Step 3:} Choosing even integers $k_i$ ($1 \leq i \leq 6$) s.t.
for every $1 \leq i \neq j \leq 6$, $k_im_i \ne k_jm_j$.

Starting from $i=6$ and going down, set $k_i=k'_i$ if $k'_im_i \ne k_jm_j$ for all $j>i$.
It may happen that for some $i < j$, $k'_im_i =k_jm_j $.
In this case, we shall replace $k'_i$ by $k_i$, by choosing it from the set $\{k'_i-2l: 1
\leq l \leq 5\}$
such that for every $j > i$, $k_im_i \ne k_jm_j$. Note
that by our assumption, the integers $k_i$ ($1 \leq i \leq 6$) are
positive.

{\bf Step 4:} Defining the conjugacy classes $C_i$ ($1\leq i \leq
6$).

Assume that $n$ is large enough and let $C_i$ ($1\leq i \leq 6$) be
conjugacy classes in $S_n$ with cycle shapes
\[
    (m_i^{k_i}, 1^{f_i}), \text{ where } f_i = n-k_im_i.
\]

Observe that the conjugacy classes $C_i$ ($1\leq i \leq 6$) satisfy
the following properties:

\begin{enumerate}\renewcommand{\theenumi}{\it \roman{enumi}}
\item For every $1\leq i \leq 6$, $\sgn(C_i)=1$, since $C_i$
contains an even number of cycles (as the $k_i$-s are even).

\item Set $f:=12m_1$. Then for every $1\leq i \leq 6$,
\[
    f_i = n-k_im_i \leq (k'_i+2)m_i - (k'_i-10)m_i = 12m_i \leq 12m_1
    =f,
\]
and hence it is bounded independently of $n$.

\item For every $1\leq i \neq j \leq 6$, $f_i \neq f_j$, since $k_im_i \ne
k_jm_j$.

\item Let $c_i \in C_i$ be some element, then any
non-trivial power $c_i^{l_i}$ has exactly $f_i$ fixed points.

\item By (\emph{iii}) and (\emph{iv}), for any $1\leq i \neq j \leq 6$ and any two
integers $l_i,l_j$, if the powers $c_i^{l_i}$ and $c_j^{l_j}$ are
not trivial, then they belong to different conjugacy classes in
$S_n$.
\end{enumerate}

{\bf Step 5:} Defining the conjugacy classes
$C_{r_1},C_{s_1},C_{t_1},C_{r_2},C_{s_2},C_{t_2}$.

Let $k_{r_1},k_{s_1},k_{t_1},k_{r_2},k_{s_2},k_{t_2}$
(respectively $f_{r_1},f_{s_1},f_{t_1},f_{r_2},f_{s_2},f_{t_2}$)
be the elements of the set $\{k_1,\dots,k_6\}$ (respectively
$\{f_1,\dots,f_6\}$), ordered by the same correspondence between
$\{r_1,s_1,t_1,r_2,s_2,t_2\}$ and $\{m_1,\dots,m_6\}$.

Now, $C_{r_1},C_{s_1},C_{t_1},C_{r_2},C_{s_2},C_{t_2}$ are the six
conjugacy classes in $S_n$ with cycle-shapes
$(r_1^{k_{r_1}},1^{f_{r_1}})$, $(s_1^{k_{s_1}},1^{f_{s_1}})$,
$(t_1^{k_{t_1}},1^{f_{t_1}})$, $(r_2^{k_{r_2}},1^{f_{r_2}})$,
$(s_2^{k_{s_2}},1^{f_{s_2}})$, $(t_2^{k_{t_2}},1^{f_{t_2}})$
respectively.
\end{alg}

In a similar way, we prove Theorem~\ref{thm.unmixed.Sn} regarding
the symmetric groups.

\begin{proof}[Proof of Theorem~\ref{thm.unmixed.Sn}]
Assume that $(r_1,s_1,t_1)$ and $(r_2,s_2,t_2)$ are two hyperbolic
types, such that at least two of $(r_1,s_1,t_1)$ are even and at
least two of $(r_2,s_2,t_2)$ are even, and that $n$ is large enough.
By slightly modifying Algorithm~\ref{alg.conj.class}, we may choose
six almost homogeneous conjugacy classes $C_{r_1}$, $C_{s_1}$,
$C_{t_1}$, $C_{r_2}$, $C_{s_2}$, $C_{t_2}$ in $S_n$, of orders
$r_1,s_1,t_1$, $r_2,s_2,t_2$ respectively, such that two classes of
$C_{r_1},C_{s_1},C_{t_1}$ and two classes of
$C_{r_2},C_{s_2},C_{t_2}$ contain only odd permutations, and all
these classes have different numbers of fixed points.

By Theorem~\ref{thm.conj.An} and Corollary~\ref{cor.conj.Sn}, the
probability that three random elements $(x_1,y_1,z_1)$ (equivalently
$(x_2,y_2,z_2)$) whose product is $1$, taken from the almost
homogeneous conjugacy classes $(C_{r_1},C_{s_1},C_{t_1})$
(equivalently $(C_{r_2},C_{s_2},C_{t_2})$) will generate $S_n$,
tends to $1$ as $n \rightarrow \infty$.

Therefore, if $n$ is large enough, there exists a quadruple $(x_1,
y_1; x_2, y_2)$ which is an unmixed Beauville structure for $S_n$,
where $(x_1, y_1, z_1)$ has type $(r_1, s_1, t_1)$ and $(x_2, y_2,
z_2)$ has type $(r_2, s_2, t_2)$.
\end{proof}

Moreover, since Theorem~\ref{thm.conj.An} and
Corollary~\ref{cor.conj.Sn} apply to any polygonal group, one can
modify Algorithm~\ref{alg.conj.class} and deduce the following
Corollaries.

\begin{cor}\label{cor.unmixed.ram.An}
Let $\tau_1 = (m_{1,1},\dots,m_{1,r_1})$ and $\tau_2 =
(m_{1,1},\dots,m_{1,r_2})$ be two sequences of natural numbers such
that $m_{k,i} \geq 2$ and $\sum_{i=1}^{r_k}(1-1/m_{k,i}) > 2$ for
$k=1,2$. Then almost all alternating groups $A_n$ admit an unmixed
ramification structure of type $(\tau_1,\tau_2)$.
\end{cor}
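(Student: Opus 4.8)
The plan is to imitate the proof of Theorem~\ref{thm.unmixed.An} almost verbatim, the only change being that Algorithm~\ref{alg.conj.class} must now produce $R := r_1 + r_2$ conjugacy classes rather than six. First I would observe that the hypothesis $\sum_{i=1}^{r_k}(1-1/m_{k,i}) > 2$ is precisely inequality~\eqref{eq.Fuchsian} with $g'=0$, so each polygonal group $\Gamma_k := \Gamma(-\mid m_{k,1},\dots,m_{k,r_k})$ is a Fuchsian group and Theorem~\ref{thm.conj.An} applies to it.

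Next I would run the modified algorithm. Sort the $R$ integers $m_{k,i}$ as $m_R \le \dots \le m_1$, assume $n > 100 R m_1$, and for each index set $k'_i$ to the largest even integer not exceeding $\lfloor n/m_i\rfloor$, exactly as in Step~2, so that $k'_i m_i \le n \le (k'_i+2)m_i$. The only point needing adjustment is Step~3: enlarge the candidate set to $\{k'_i - 2\ell : 0 \le \ell \le R-1\}$ and choose the $k_i$ greedily, one index at a time. When index $i$ is treated, at most $i-1 \le R-1$ products $k_j m_j$ have been fixed, while the $R$ candidates for $k_i$ yield $R$ distinct products $k_i m_i$; hence at least one candidate avoids all of them, and all the $k_i m_i$ can be made pairwise distinct. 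Each $k_i$ stays even, so every class $C_i$ of cycle-shape $(m_i^{k_i},1^{f_i})$ has $\sgn(C_i)=1$; the fixed-point counts $f_i = n - k_i m_i$ remain bounded, since $f_i \le 2R m_i \le 2R m_1 =: f$; and they are pairwise distinct because the $k_i m_i$ are. Thus I obtain $R$ almost homogeneous classes of even permutations, which I split into the $r_1$-tuple $\mathbf{C}_1$ and the $r_2$-tuple $\mathbf{C}_2$ according to $\tau_1,\tau_2$.

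Then I would apply Theorem~\ref{thm.conj.An} to $\Gamma_1$ with classes $\mathbf{C}_1$ and to $\Gamma_2$ with classes $\mathbf{C}_2$: in each case the classes are almost homogeneous with $\prod_i \sgn(C_i)=1$, so the probability that a random homomorphism has image containing $A_n$ tends to $1$; since $\sgn(C_i)=1$ for all $i$ the image lies in $A_n$, hence equals $A_n$. For $n$ large both probabilities are positive, so a spherical system $T_1$ of type $\tau_1$ and a spherical system $T_2$ of type $\tau_2$, each generating $A_n$ and each with product of entries equal to $1$ (as the relation $c_1\cdots c_{r_k}=1$ in $\Gamma_k$ is preserved), simultaneously exist. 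Finally, disjointness is automatic: every nontrivial power of an element of cycle-shape $(m_i^{k_i},1^{f_i})$ still has exactly $f_i$ fixed points, so a nontrivial power of an entry of $T_1$ and a nontrivial power of an entry of $T_2$ can be $S_n$-conjugate only if their fixed-point counts coincide; as all $R$ counts are distinct this never happens, whence $\Sigma(T_1)\cap\Sigma(T_2)=\{1\}$ and $(T_1,T_2)$ is the desired unmixed ramification structure.

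The real content is carried entirely by the Liebeck--Shalev Theorem~\ref{thm.conj.An}, which I am assuming; the only genuine work is the bookkeeping in the modified Step~3. The one place to be careful is the pigeonhole count guaranteeing pairwise distinct products $k_i m_i$ across all $R$ classes at once, while keeping every $f_i$ bounded and every $k_i$ even and positive --- that is, replacing the ad hoc constant $6$ of the original algorithm by $R$ and checking that $R$ candidates still suffice. This is routine, so I do not anticipate a serious obstacle.
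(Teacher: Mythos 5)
Your proposal is correct and is exactly the route the paper intends: the paper proves this Corollary only by remarking that Theorem~\ref{thm.conj.An} applies to any polygonal group and that Algorithm~\ref{alg.conj.class} ``can be modified,'' and your replacement of the constant $6$ by $R=r_1+r_2$ (candidate set $\{k'_i-2\ell: 0\le \ell\le R-1\}$, greedy pigeonhole choice, bound $f_i\le 2Rm_1$, positivity via $n>100Rm_1$) is precisely that modification, carried out correctly. Your verification of signs, boundedness and distinctness of the fixed-point counts, and the resulting disjointness $\Sigma(T_1)\cap\Sigma(T_2)=\{1\}$ matches the paper's argument for Theorem~\ref{thm.unmixed.An} verbatim.
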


\begin{cor}\label{cor.unmixed.ram.Sn}
Let $\tau_1 = (m_{1,1},\dots,m_{1,r_1})$ and $\tau_2 =
(m_{1,1},\dots,m_{1,r_2})$ be two sequences of natural numbers such
that $m_{k,i} \geq 2$, at least two of $(m_{k,1},\dots,m_{ k,r_k})$
are even and $\sum_{i=1}^{r_k}(1-1/m_{k,i}) > 2$, for $k=1,2$. Then
almost all symmetric groups $S_n$ admit an unmixed ramification
structure of type $(\tau_1,\tau_2)$.
\end{cor}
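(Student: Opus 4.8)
The plan is to adapt, essentially verbatim, the proof of Theorem~\ref{thm.unmixed.Sn} together with Algorithm~\ref{alg.conj.class}, replacing the six triangle-type classes by $N:=r_1+r_2$ classes, just as one passes from the proof of Theorem~\ref{thm.unmixed.An} to Corollary~\ref{cor.unmixed.ram.An}. First I would note that the hypothesis $\sum_{i=1}^{r_k}(1-1/m_{k,i})>2$ is exactly the inequality~\eqref{eq.Fuchsian} with $g'=0$, so that each polygonal group $\Ga(-\mid m_{k,1},\dots,m_{k,r_k})$ is a Fuchsian group and both Theorem~\ref{thm.conj.An} and Corollary~\ref{cor.conj.Sn} apply to it.

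Next I would run the analogue of Algorithm~\ref{alg.conj.class} on the $N$ integers $m_{k,i}$. For each index choose $k_{k,i}$ close to $n/m_{k,i}$, so that the number of fixed points $f_{k,i}=n-k_{k,i}m_{k,i}$ stays bounded by a constant depending only on $N$ and the $m_{k,i}$ (and not on $n$); then enlarge the selection window of Step~3 from $\{k'-2l:0\le l\le 5\}$ to $\{k'-2l:0\le l\le N-1\}$ and pick the $k_{k,i}$ greedily so that all $N$ products $k_{k,i}m_{k,i}$, equivalently all fixed-point numbers $f_{k,i}$, are pairwise distinct. At each greedy step class $i$ has $N$ candidate values of $k_{k,i}m_{k,i}$, differing by multiples of $2m_{k,i}$, while fewer than $N$ previously chosen values must be avoided, so a legal choice always remains.

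The one genuinely new ingredient compared with Corollary~\ref{cor.unmixed.ram.An} is the sign bookkeeping needed to land in $S_n$ rather than merely in $A_n$. A class of cycle-shape $(m^k,1^f)$ has sign $(-1)^{k(m-1)}$, hence is an odd permutation precisely when $m$ is even and $k$ is odd. Using the assumption that at least two of the $m_{k,i}$ are even, in each tuple I would single out two such indices and force the corresponding $k_{k,i}$ to be odd, while keeping every other $k_{k,i}$ even. This produces exactly two odd classes per tuple, so $\prod_i \sgn(C_{k,i})=1$ (the condition under which $\Hom_{\mathbf{C}}$ is nonempty and Theorem~\ref{thm.conj.An} applies) and simultaneously guarantees that each spherical system contains an odd permutation. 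Since varying $k$ in steps of $2$ preserves its parity, these prescribed parities are compatible with the greedy distinctness procedure of the previous paragraph.

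Finally I would invoke Theorem~\ref{thm.conj.An} (together with Corollary~\ref{cor.conj.Sn}): a random homomorphism from $\Ga(-\mid\tau_k)$ with the chosen classes has image containing $A_n$ with probability tending to $1$, and since the image also contains an odd permutation it must equal $S_n$; thus for $n$ large enough both tuples give genuine spherical systems of generators of $S_n$ of types $\tau_1$ and $\tau_2$. Because all the $f_{k,i}$ are distinct and every nontrivial power of an element of cycle-shape $(m^k,1^f)$ again has exactly $f$ fixed points, a nontrivial element of $\Sigma(T_1)$ and one of $\Sigma(T_2)$ can never be $S_n$-conjugate, whence $\Sigma(T_1)\cap\Sigma(T_2)=\{1\}$ and the pair is disjoint. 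The hard part will be the simultaneous bookkeeping of the middle step, reconciling the parities forced by the sign condition with both the distinctness and the uniform boundedness of the $f_{k,i}$, but the enlarged greedy window renders this routine for all sufficiently large $n$.
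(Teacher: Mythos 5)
Your proposal is correct and follows exactly the route the paper intends: the paper proves this corollary only by remarking that Theorem~\ref{thm.conj.An} and Corollary~\ref{cor.conj.Sn} apply to any polygonal group and that Algorithm~\ref{alg.conj.class} ``can be modified'', and your write-up supplies precisely that modification (enlarging the greedy window to $N=r_1+r_2$ classes with pairwise distinct, uniformly bounded fixed-point numbers, and forcing exactly two classes per tuple with $m$ even and $k$ odd so that $\prod_i\sgn(C_{k,i})=1$ while each system contains an odd permutation, whence the image containing $A_n$ must be all of $S_n$). All the details check out, including the parity-compatible step-$2$ adjustments and the fixed-point argument for disjointness, so this is a faithful completion of the paper's sketched argument rather than a different proof.
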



\subsection{Beauville Structures for $\PSL(2,p^e)$}\label{sect.PSL}

In this section we prove Theorem~\ref{thm.unmixed.PSL} and
Corollary~\ref{cor.PSL.types}. The proof is based on well-known
properties of $\PSL(2,p^e)$ (see for example \cite{Di,Go,Su}) and on
results of Macbeath~\cite{Ma}.

\subsubsection{Theoretical Background I -- Properties of
$\PSL(2,p^e)$}

Let $q = p^e$, where $p$ is a prime number and $e \geq 1$. Recall
that $\GL(2,q)$ is the group of invertible $2 \times 2$ matrices
over the finite field with $q$ elements, which we denote by $\FQ$,
and $\SL(2,q)$ is the subgroup of $\GL(2,q)$ comprising the matrices
with determinant $1$. Then $\PGL(2,q)$ and $\PSL(2,q)$ are the
quotients of $\GL(2,q)$ and $\SL(2,q)$ by their respective centers.

When $q$ is even, then one can identify $\PSL(2,q)$ with $\SL(2,q)$
and also with $\PGL(2,q)$, and so its order is $q(q-1)(q+1)$. When
$q$ is odd, the orders of $\PGL(2,q)$ and $\PSL(2,q)$ are
$q(q-1)(q+1)$ and $\frac{1}{2}q(q-1)(q+1)$ respectively, and
therefore we can identify $\PSL(2,q)$ with a normal subgroup of
index $2$ in $\PGL(2,q)$. Also recall that $\PSL(2,q)$ is simple for
$q \neq 2,3$.

One can classify the elements of $\PSL(2,q)$ according to the
possible Jordan forms of their pre-images in $\SL(2,q)$. The
following table lists the three types of elements, according to
whether the characteristic polynomial $P(\la):=\la^2 - \al \la +1$
of the matrix $A \in \SL(2,q)$ (where $\al$ is the trace of $A$) has
$0$, $1$ or $2$ distinct roots in $\FQ$.

\begin{center}
\begin{tabular} {|c|c|c|c|c|}
\hline
element & roots       & canonical form in       & order in & conjugacy classes  \\
type    & of $P(\la)$ & $\SL(2,\overline{\mathbb{F}}_p)$ & $\PSL(2,q)$ & \\
\hline \hline

& & & & two conjugacy classes\\
unipotent & $1$ root & $\begin{pmatrix} \pm 1 & 1 \\ 0 & \pm 1 \end{pmatrix}$ & $p$ &  in $\PSL(2,q)$, which \\
 & & $\al=\pm 2$ & & unite in $\PGL(2,q)$ \\
\hline

split & $2$ roots & $\begin{pmatrix} a & 0 \\ 0 & a^{-1}\end{pmatrix}$ & divides $\frac{1}{d}(q-1)$ & for each $\al$: \\
      &           & where $a \in \mathbb{F}_q^*$ & $d=1$ for $q$ even & one conjugacy class \\
      &           & and $a+a^{-1}=\al$ & $d=2$ for $q$ odd & in $\PSL(2,q)$ \\
\hline

non-split & no roots & $\begin{pmatrix} a & 0 \\ 0 & a^q\end{pmatrix}$ & divides $\frac{1}{d}(q+1)$ & for each $\al$: \\
          &          & where $a \in \mathbb{F}_{q^2}^* \setminus \mathbb{F}_q^*$ & $d=1$ for $q$ even & one conjugacy class\\
          &          & $a^{q+1}=1$ & $d=2$ for $q$ odd & in $\PSL(2,q)$ \\
& &  and $a + a^q = \al$ & & \\
\hline
\end{tabular}
\end{center}

\medskip

The subgroups of $\PSL(2,q)$ are well-known (see~\cite{Di,Su}), and
fall into the following three classes.

\medskip

{\bf Class I:} The small triangle subgroups.

These are the \emph{finite} triangle groups $\Delta=\Delta(l,m,n)$,
which can occur if and only if $1/l + 1/m + 1/n > 1$.

This inequality holds only for the following triples:
\begin{itemize}
\item $(2,2,n)$ : $\Delta$ is a dihedral subgroup of order $2n$.
\item $(2,3,3)$ : $\Delta \cong A_4$.
\item $(2,3,4)$ : $\Delta \cong S_4$.
\item $(2,3,5)$ : $\Delta \cong A_5$.
\end{itemize}

Moreover, if at least two of $l,m$ and $n$ are equal to $2$ or if $2
\leq l,m,n \leq 5$, then a subgroup of $\PSL(2,q)$ which is
generated by three elements $x,y$ and $z=(xy)^{-1}$, of orders
$l$,$m$ and $n$ respectively, may be a small triangle group (for a
detailed list of such triples see~\cite[\S 8]{Ma}).

\medskip

{\bf Class II:} Structural subgroups.

Let $\mathcal{B}$ be a subgroup of $\PSL(2,q)$ defined by the images
of the matrices
\[
    \left\{ \begin{pmatrix}
      a & b \\ 0 & a^{-1}
    \end{pmatrix}: a \in \FQ^*,\ b \in \FQ
    \right\},
\]
and let $\mathcal{C}$ be a subgroup of $\PSL(2,\overline {\mathbb{F}}_q)$
defined by the images of the matrices
\[
    \left\{ \begin{pmatrix}
      t & 0 \\ 0 & t^q
    \end{pmatrix}: t \in \mathbb{F}_{q^2}\setminus\FQ,\ t^{q+1}=1
    \right\}.
\]

Any subgroup of $\PSL(2,q)$ which can be conjugated (in
$\PSL(2,\overline {\mathbb{F}}_q$) to a subgroup of either $\mathcal{B}$ or
$\mathcal{C}$ is called a \emph{structural subgroup} of $\PSL(2,q)$.

\medskip

{\bf Class III:} Subfield subgroups.

If $\mathbb{F}_{p^r}$ is a subfield of $\FQ$, then $\PSL(2,p^r)$ is
a subgroup of $\PSL(2,q)$. If the quadratic extension
$\mathbb{F}_{p^{2r}}$ is also a subfield of $\FQ$, then
$\PGL(2,p^r)$ is a subgroup of $\PSL(2,q)$. These groups, as well as
any other subgroup of $\PSL(2,q)$ which is isomorphic to any one of
them, will be referred to as \emph{subfield subgroups} of
$\PSL(2,q)$.



\subsubsection{Theoretical Background II -- Generation Theorems of
Macbeath}

Let $(\al,\be,\ga) \in \FQ^3$, and denote
\[
    E(\al,\be,\ga) := \{ A,B,C \in \SL(2,q): ABC=I,
    \tr A = \al, \tr B = \be, \tr C = \ga \}.
\]

Since all elements in $\PSL(2,q)$ whose pre-images in $\SL(2,q)$
have the same trace are conjugate in $\PGL(2,q)$, all of them have
the same order in $\PSL(2,q)$. Therefore, we may denote by
$\mathcal{O}rd(\al)$ the order in $\PSL(2,q)$ of the image of a
matrix $A\in \SL(2,q)$ whose trace equals $\al$.

\begin{exam}\label{ex.ords}
$\mathcal{O}rd(0)=2$, $\mathcal{O}rd(\pm 1)=3$ and
$\mathcal{O}rd(\pm 2)=p$.
\end{exam}

\begin{theo}\cite[Theorem 1]{Ma}. \label{thm.PSL.traces}
$E(\al,\be,\ga) \neq \emptyset$ for any $(\al,\be,\ga) \in \FQ^3$.
\end{theo}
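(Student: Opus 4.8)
The plan is to reduce the statement to a trace-prescription problem for a \emph{pair} of matrices and then settle that by an elementary counting argument. Since $ABC=I$ forces $C=(AB)^{-1}$, and since every $M\in\SL(2,q)$ satisfies $M+M^{-1}=(\tr M)I$ by Cayley--Hamilton (whence $\tr(M^{-1})=\tr M$), the condition $\tr C=\ga$ is equivalent to $\tr(AB)=\ga$. Thus it suffices to produce $A,B\in\SL(2,q)$ with $\tr A=\al$, $\tr B=\be$ and $\tr(AB)=\ga$, and then set $C:=(AB)^{-1}$. I also record a cyclic symmetry: passing from $(A,B,C)$ to $(B,C,A)$ shows $E(\al,\be,\ga)\neq\emptyset$ if and only if $E(\be,\ga,\al)\neq\emptyset$, so I am free to permute $(\al,\be,\ga)$ cyclically.

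First I fix $A$ to be the companion matrix $A=\begin{pmatrix} 0 & 1 \\ -1 & \al \end{pmatrix}$, which has $\tr A=\al$ and $\det A=1$. Writing $B=\begin{pmatrix} a & b \\ c & d \end{pmatrix}$ and imposing $\tr B=a+d=\be$, I set $a=\be-d$ and compute $\tr(AB)=c-b+\al d$. Prescribing $\tr(AB)=\ga$ together with $\det B=1$ then becomes the pair of conditions $c-b=\ga-\al d$ and $bc=ad-1=(\be-d)d-1$; note that these automatically yield $\det B=ad-bc=1$. Viewing $c$ and $-b$ as the unknowns, they are the roots of $t^2-(\ga-\al d)t-\bigl((\be-d)d-1\bigr)=0$, so a valid $B$ exists precisely when, for some $d\in\FQ$, the discriminant
\[
\Delta(d)=(\al^2-4)d^2+(4\be-2\al\ga)d+(\ga^2-4)
\]
is a square in $\FQ$ (zero included).

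The remaining task is to show that $\Delta$ attains a square value. In characteristic $2$ there is nothing to prove, since the Frobenius $x\mapsto x^2$ is a bijection of $\FQ$ and hence every element is a square, so any $d$ works. Assume now that $q$ is odd and let $\chi$ denote the quadratic character of $\FQ^{*}$, with $\chi(0)=0$. If at least one of $\al,\be,\ga$ has square different from $4$, I use the cyclic symmetry to arrange $\al^2\neq4$, so that $\Delta$ is a genuine quadratic with leading coefficient $u:=\al^2-4\neq0$. When $\operatorname{disc}\Delta=0$ the polynomial has a root $d_0$ with $\Delta(d_0)=0$, already a square; otherwise the standard evaluation $\sum_{d\in\FQ}\chi(\Delta(d))=-\chi(u)$ shows that the number of pairs $(d,y)\in\FQ^2$ with $y^2=\Delta(d)$ equals $q+\sum_{d}\chi(\Delta(d))=q-\chi(u)\geq q-1>0$, so some $d$ makes $\Delta(d)$ a square.

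The only case left is $\al^2=\be^2=\ga^2=4$, that is, all three traces equal to $\pm2$; these finitely many triples I treat by hand. Starting from the unipotent choice $A=B=\begin{pmatrix} 1 & 1 \\ 0 & 1 \end{pmatrix}$, which realizes $(2,2,2)$, and adjusting a single off-diagonal entry of $B$ to realize $(2,2,-2)$, I then apply the sign flips $A\mapsto-A$ and $B\mapsto-B$ — which send $(\al,\be,\ga)$ to $(-\al,\be,-\ga)$ and $(\al,-\be,-\ga)$ respectively — to sweep out all eight sign patterns of $(\pm2,\pm2,\pm2)$. The main obstacle is the counting step in the generic odd-characteristic case; everything else is bookkeeping, and the character-sum estimate is robust enough to cover all $q$ rather than only large ones.
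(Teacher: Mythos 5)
Your reduction to prescribing $\tr A=\al$, $\tr B=\be$, $\tr(AB)=\ga$, the companion-matrix normalization, and the odd-characteristic counting are all sound (I checked the algebra: $\tr(AB)=c-b+\al d$, the quadratic $t^2-(\ga-\al d)t-\bigl((\be-d)d-1\bigr)$, the discriminant $\Delta(d)=(\al^2-4)d^2+(4\be-2\al\ga)d+(\ga^2-4)$, the Jacobsthal-type sum $\sum_d\chi(\Delta(d))=-\chi(u)$, and the $(\pm2,\pm2,\pm2)$ bookkeeping are all correct). But the characteristic-$2$ step is a genuine error, not just a shortcut. Your criterion ``a valid $B$ exists precisely when $\Delta(d)$ is a square'' comes from the quadratic formula and is valid only for odd $q$: in characteristic $2$ the discriminant of $t^2+St+P$ is $S^2$, always a square, yet the polynomial need not split --- for $S\neq 0$ it splits over $\FQ$ if and only if $\mathrm{Tr}_{\mathbb{F}_q/\mathbb{F}_2}(P/S^2)=0$ (Artin--Schreier), so ``every element is a square, hence any $d$ works'' is a non sequitur. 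It is also concretely false: take $q=2$ and $(\al,\be,\ga)=(0,1,1)$. Then for every $d\in\mathbb{F}_2$ one gets $S=\ga-\al d=1$ and $P=1-(\be-d)d=1$, and $t^2+t+1$ is irreducible over $\mathbb{F}_2$, so \emph{no} choice of $d$ yields a valid $B$. Worse, in this example the companion-matrix normalization itself loses all solutions: $E(0,1,1)\neq\emptyset$ over $\mathbb{F}_2$ only via $A=I$ (which has trace $2=0$ but is not cyclic, hence not conjugate to your companion matrix), e.g.\ $A=I$, $B$ of order $3$, $C=B^{-1}$.

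The gap is repairable inside your own framework, which is why I'd call this a fixable flaw rather than a dead end: in characteristic $2$, if some coordinate is nonzero, use your cyclic symmetry to arrange $\al\neq 0$ and choose $d=\ga/\al$, so that $S=\ga-\al d=0$; then $t^2=P$ is solvable because squaring is bijective, e.g.\ $c=-b=P^{q/2}$, and one checks $\det B=1$ as before. If $\al=\be=\ga=0$ then $S=0$ for every $d$ automatically. (Restate the discriminant criterion as valid for odd $q$ only.) For context: the paper offers no proof of this statement at all --- it is quoted verbatim as Theorem 1 of Macbeath's paper [Ma] --- so a self-contained elementary argument like yours, once the characteristic-$2$ case is patched, is a legitimate alternative to the citation rather than a deviation from the paper's method.
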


\begin{defin}
Let $(\al,\be,\ga) \in \FQ^3$. We say that $(\al,\be,\ga)$ is
\emph{singular} if
\[
\al^2 + \be^2 + \ga^2 - \al\be\ga = 4.
\]

Let $l=\mathcal{O}rd(\al)$, $m=\mathcal{O}rd(\be)$ and
$n=\mathcal{O}rd(\ga)$. We say that $(\al,\be,\ga)$ is \emph{small}
if at least two of $l,m,n$ are equal to $2$ or if $2 \leq l,m,n \leq
5$.
\end{defin}

\begin{theo}\cite[Theorem 2]{Ma}.
$(\al,\be,\ga) \in \FQ^3$ is singular if and only if for $(A,B,C)
\in E(\al,\be,\ga)$, the group generated by the images of $A$ and
$B$ is a structural subgroup of $\PSL(2,q)$.
\end{theo}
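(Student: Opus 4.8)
The plan is to reduce this geometric condition to a single trace identity and then to a reducibility statement about the subgroup. First I would record the Fricke trace identity for $\SL(2)$: for any $A,B \in \SL(2,\overline{\FQ})$,
\[
  \tr(ABA^{-1}B^{-1}) = (\tr A)^2 + (\tr B)^2 + (\tr AB)^2 - (\tr A)(\tr B)(\tr AB) - 2.
\]
Since a triple $(A,B,C) \in E(\al,\be,\ga)$ satisfies $C=(AB)^{-1}$ and $\tr M = \tr M^{-1}$ for $M \in \SL(2)$, one has $\tr(AB) = \tr C = \ga$. Substituting yields $\tr(ABA^{-1}B^{-1}) = \al^2 + \be^2 + \ga^2 - \al\be\ga - 2$, so the singularity condition $\al^2+\be^2+\ga^2-\al\be\ga = 4$ is \emph{exactly} equivalent to $\tr(ABA^{-1}B^{-1}) = 2$. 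Thus the theorem reduces to showing that the group generated by the images of $A$ and $B$ is a structural subgroup if and only if the commutator $[A,B]$ has trace $2$.

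The core step is to prove that $\tr(ABA^{-1}B^{-1})=2$ holds if and only if $A$ and $B$ admit a common eigenvector over $\overline{\FQ}$, i.e. the subgroup they generate is reducible. I would prove this by a direct computation after normalizing $A$. If $A=\pm I$ the claim is immediate, so assume $A\neq\pm I$ and put $A$ in Jordan form over $\overline{\FQ}$. If $A$ is regular semisimple, say $A=\mathrm{diag}(\la,\la^{-1})$ with $\la\neq\pm1$, then writing $B=\begin{pmatrix} a & b \\ c & d\end{pmatrix}$ and using $ad-bc=1$ one finds $\tr(ABA^{-1}B^{-1})-2 = -(\la-\la^{-1})^2\,bc$; as $(\la-\la^{-1})^2\neq 0$, this vanishes precisely when $bc=0$, i.e. when $B$ fixes one of the two eigenlines of $A$. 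If $A$ is parabolic, say $A=\left(\begin{smallmatrix}1&1\\0&1\end{smallmatrix}\right)$, the same type of computation gives $\tr(ABA^{-1}B^{-1})-2 = c^2$, which vanishes exactly when $c=0$, i.e. when $B$ fixes the unique eigenline of $A$. In every case $\tr(ABA^{-1}B^{-1})=2$ is equivalent to $A$ and $B$ sharing a common invariant line, hence to reducibility of the subgroup they generate.

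Finally I would identify reducibility with being a structural subgroup, using that $H:=\langle A,B\rangle\subset\PSL(2,q)$ is already defined over $\FQ$. A structural subgroup lies, after conjugation in $\PSL(2,\overline{\FQ})$, inside $\mathcal{B}$ or $\mathcal{C}$, and both stabilize a line (the point $[1:0]$, respectively the pair of diagonal eigenlines), so it is reducible; this gives one direction at once. Conversely, if $H$ is reducible, its set of common invariant lines in $\mathbb{P}^1(\overline{\FQ})$ is nonempty and stable under the Frobenius, and I would split into cases accordingly. If there is a unique common line it is Frobenius-stable, hence $\FQ$-rational, and conjugating it to $[1:0]$ by an element of $\PGL(2,q)$ places $H$ inside $\mathcal{B}$. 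If there are two common lines then $H$ is simultaneously diagonalizable; when the two lines are $\FQ$-rational $H$ lies in the diagonal torus, again inside $\mathcal{B}$, and when they form a Frobenius-conjugate pair over $\mathbb{F}_{q^2}$ the eigenvalues of each element satisfy $t^{q+1}=1$, so diagonalizing over $\mathbb{F}_{q^2}$ places $H$ inside $\mathcal{C}$. I expect this last descent step---tracking the field of definition of the invariant lines in order to decide between $\mathcal{B}$ and $\mathcal{C}$---to be the main subtlety, whereas the Fricke identity and the commutator trace computation are routine.
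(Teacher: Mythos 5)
The paper gives no proof of this statement at all---it is quoted directly from Macbeath \cite[Theorem 2]{Ma}---and your argument is correct and is essentially Macbeath's original one: the Fricke identity $\tr[A,B]=\al^2+\be^2+\ga^2-\al\be\ga-2$ turns singularity into $\tr[A,B]=2$, the normal-form computation identifies this with reducibility of $\langle A,B\rangle$ over $\overline{\FQ}$ (your formulas $-(\la-\la^{-1})^2bc$ and $c^2$ check out in all characteristics, including $2$), and the Frobenius descent on the set of common invariant lines correctly sorts reducible subgroups into conjugates into $\mathcal{B}$ (one rational line, or two rational lines) and $\mathcal{C}$ (a Frobenius-conjugate pair over $\mathbb{F}_{q^2}$, where $t\cdot t^q=\det=1$ gives $t^{q+1}=1$ as you say). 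The only omitted degenerate case is $\langle A,B\rangle\subseteq\{\pm I\}$, where there are more than two common lines, and it is immediate: such a subgroup is trivial in $\PSL(2,q)$, hence lies in $\mathcal{B}$, and the corresponding triples with $\al,\be,\ga=\pm 2$ and compatible signs are singular.
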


\begin{theo}\cite[Theorem 4]{Ma}.\label{thm.PSL.gen}
If $(\al,\be,\ga) \in \FQ^3$ is neither singular nor small, then for
any $(A,B,C) \in E(\al,\be,\ga)$, the group generated by the images
of $A$ and $B$ is a subfield subgroup of $\PSL(2,q)$.
\end{theo}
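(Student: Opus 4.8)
The plan is to argue by elimination against the classification of subgroups of $\PSL(2,q)$ recalled above, which sorts them into the small triangle subgroups (Class~I), the structural subgroups (Class~II), and the subfield subgroups (Class~III). I fix $(\al,\be,\ga)\in\FQ^3$ that is neither singular nor small, take any $(A,B,C)\in E(\al,\be,\ga)$, and write $\bar g$ for the image in $\PSL(2,q)$ of $g\in\SL(2,q)$. Putting $G:=\langle \bar A,\bar B\rangle$, the relation $ABC=I$ gives $\bar C=(\bar A\bar B)^{-1}\in G$, so $G=\langle \bar A,\bar B,\bar C\rangle$ and the orders of $\bar A,\bar B,\bar C$ are exactly $l=\mathcal{O}rd(\al)$, $m=\mathcal{O}rd(\be)$, $n=\mathcal{O}rd(\ga)$. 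By the classification $G$ lies in one of the three classes, so it will suffice to rule out Classes~I and~II; the conclusion is then that $G$ is a subfield subgroup, with the whole group allowed (the case $p^r=q$).

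Class~II I would exclude at once. Since $(\al,\be,\ga)$ is not singular, the contrapositive of \cite[Theorem 2]{Ma} shows that $G$ is not conjugate over $\overline{\FQ}$ into $\mathcal{B}$ or $\mathcal{C}$, i.e.\ $G$ is not a structural subgroup. Note that \cite[Theorem 2]{Ma} is phrased for the given triple, which is exactly what we use; moreover non-singularity is equivalent to the absolute irreducibility of $(\bar A,\bar B)$, and absolutely irreducible pairs with the same traces $(\al,\be,\ga)$ are conjugate, so the outcome is independent of the chosen triple --- which is what makes the phrase ``for any $(A,B,C)$'' in the statement meaningful.

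To exclude Class~I I would use the hypothesis that $(\al,\be,\ga)$ is not small. Suppose for contradiction that $G$ were a small triangle subgroup, i.e.\ isomorphic to a dihedral group or to one of $A_4$, $S_4$, $A_5$, and read off the admissible orders of a generating triple with product $1$:
\begin{itemize}
\item If $G$ is dihedral, it cannot be generated by two elements of its cyclic rotation subgroup, so at least one of $\bar A,\bar B$ is a reflection; computing $\bar C=(\bar A\bar B)^{-1}$ then shows that at least two of $\bar A,\bar B,\bar C$ are reflections, whence at least two of $l,m,n$ equal $2$.
\item If $G\cong A_4$, $S_4$ or $A_5$, every nontrivial element has order in $\{2,3\}$, $\{2,3,4\}$ or $\{2,3,5\}$ respectively; as $G$ is noncyclic none of $\bar A,\bar B,\bar C$ is trivial, so $2\le l,m,n\le 5$.
\end{itemize}
In either case $(\al,\be,\ga)$ would be small, contrary to hypothesis, so $G$ is not in Class~I.

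Having ruled out Classes~I and~II, the classification forces $G$ into Class~III, so $G$ is a subfield subgroup of $\PSL(2,q)$, which is the assertion. Since we are permitted to quote both Dickson's classification and \cite[Theorem 2]{Ma}, the only genuine work is the Class~I analysis, and that is where the main obstacle will lie: making it exhaustive. In particular I would check the degenerate dihedral cases (such as $G\cong V_4$, where even three of $l,m,n$ equal $2$) and treat the small characteristics $p=2,3$ and small fields with care, since there $A_4,S_4,A_5$ may degenerate, coincide, or fail to embed, and the values $\mathcal{O}rd(\pm 2)=p$ and $\mathcal{O}rd(0)=2$ can collide --- precisely the situations the definition of \emph{small} is designed to absorb.
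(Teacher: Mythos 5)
The paper gives no proof of this statement: it is quoted directly as \cite[Theorem 4]{Ma}, so there is no internal argument to compare against. Your reconstruction is sound, and it is in fact the shape of Macbeath's own proof: Dickson's trichotomy, with Class~II excluded by the non-singularity hypothesis via \cite[Theorem 2]{Ma}, and Class~I excluded by the non-smallness hypothesis through the order analysis of generating triples in dihedral groups and in $A_4$, $S_4$, $A_5$. Two fine points. First, for the elimination to be legitimate you implicitly need every cyclic (and elementary abelian, and Frobenius-type) subgroup to be structural, so that nothing escapes the three classes; this does hold under the paper's conventions, since split, unipotent and non-split cyclic subgroups are conjugate over $\overline{\mathbb{F}}_q$ into $\mathcal{B}$ or $\mathcal{C}$, while dihedral subgroups are exactly the $(2,2,n)$ triangle groups of Class~I --- it is worth saying this explicitly, as it is what makes ``by elimination'' work. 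Second, the worry in your closing paragraph is largely unfounded: you only need the implication ``$G$ in Class~I $\Rightarrow$ $(\al,\be,\ga)$ small,'' and your order computations for that direction are characteristic-free; degenerations of $A_4$, $S_4$, $A_5$ in characteristic $2$ or $3$ could only obstruct the converse, which you never use. The one degenerate case genuinely worth pinning down is $A$, $B$ or $C$ equal to $\pm I$, where the image has order $1$ while $\mathcal{O}rd(\pm 2)=p$, so the identification of $(l,m,n)$ with the trace-defined orders momentarily fails; your ``$G$ noncyclic'' remark handles it inside Class~I, but it is cleaner to note that a trivial image forces $G$ to be cyclic, hence structural, hence $(\al,\be,\ga)$ singular by \cite[Theorem 2]{Ma} --- so non-singularity disposes of this case at the outset and secures $l=\mathcal{O}rd(\al)$, $m=\mathcal{O}rd(\be)$, $n=\mathcal{O}rd(\ga)$ throughout. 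With these two observations made explicit, your argument is a complete and correct proof of the quoted theorem.
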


Macbeath~\cite{Ma} used these generation theorems of $\PSL(2,q)$ to
prove that $\PSL(2,q)$ can be generated by two elements one of which
is an involution. Moreover, he classified all the values of $q$ for
which $\PSL(2,q)$ is a \emph{Hurwitz group}, namely a quotient of
the Hurwitz triangle group $\Delta(2,3,7)$.
In addition, he deduced the following.

\begin{cor}\cite[Theorem 7]{Ma}.\label{cor.unip}
If $p$ is an odd prime, then $\PSL(2,p)$ can be generated by two
unipotents whose product is also unipotent.
\end{cor}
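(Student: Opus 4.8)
The plan is to realize unipotent elements through their traces and then to force generation by means of Macbeath's theorems. By Example~\ref{ex.ords}, the image in $\PSL(2,p)$ of a matrix $A\in\SL(2,p)$ is unipotent (of order $p$) exactly when $\tr A=\pm 2$. So I want to produce a triple $(A,B,C)\in E(\al,\be,\ga)$ with $\al,\be,\ga\in\{2,-2\}$ whose images $\bar A,\bar B$ generate $\PSL(2,p)$; since $ABC=I$, the product $\bar A\bar B=\bar C^{-1}$ will then automatically be unipotent, yielding the desired pair of unipotents with unipotent product.

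To force generation I would arrange that $(\al,\be,\ga)$ is neither singular nor small and invoke Theorem~\ref{thm.PSL.gen}, which then guarantees that $\langle\bar A,\bar B\rangle$ is a subfield subgroup of $\PSL(2,p)$. The decisive simplification is that for the prime field $\mathbb{F}_p$ there is no proper subfield, so the only subfield subgroup of $\PSL(2,p)$ is $\PSL(2,p)$ itself; hence Theorem~\ref{thm.PSL.gen} delivers full generation outright. Nonemptiness of $E(\al,\be,\ga)$ is supplied by Theorem~\ref{thm.PSL.traces}, so such a triple $(A,B,C)$ genuinely exists.

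It remains to verify the numerical conditions, which is where the trace signs must be chosen with care. With all three traces in $\{2,-2\}$ one has $\al^2+\be^2+\ga^2=12$ and $\al\be\ga=\pm 8$, the sign being negative precisely when an odd number of the traces equal $-2$. Thus the singularity equation $\al^2+\be^2+\ga^2-\al\be\ga=4$ becomes $12\mp 8=4$, which fails if we take, say, $(\al,\be,\ga)=(2,2,-2)$, since $12-(-8)=20\neq 4$ for $p\neq 2$. For smallness, the three orders all equal $\mathcal{O}rd(\pm 2)=p$, and a triple is small only if two of its orders are $2$ or all lie in $\{2,3,4,5\}$; both possibilities are excluded once $p\geq 7$. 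Hence for every odd prime $p\geq 7$ the triple $(2,2,-2)$ is neither singular nor small, and the argument closes.

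The main obstacle is exactly the two small primes $p=3$ and $p=5$, for which every all-unipotent trace triple is small and Theorem~\ref{thm.PSL.gen} gives no information. These I would dispose of by hand through the exceptional isomorphisms $\PSL(2,3)\cong A_4$ and $\PSL(2,5)\cong A_5$, where unipotents are $3$-cycles and $5$-cycles respectively: for instance $x=(1\,2\,3)$, $y=(1\,3\,4)$ in $A_4$ generate the whole group with $xy$ again a $3$-cycle, and one exhibits an analogous pair of $5$-cycles generating $A_5$ with product a $5$-cycle. Checking generation in these two finite groups is routine, so the only genuinely delicate point in the general argument is selecting the signs of the traces so as to land off the singular locus while keeping all three elements unipotent.
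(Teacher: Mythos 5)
Your argument is correct, and in fact the paper contains no proof of this statement at all: it is imported verbatim from Macbeath as \cite[Theorem 7]{Ma}. Your reconstruction is essentially Macbeath's own argument, assembled from exactly the ingredients the paper quotes: realize all three elements with traces in $\{2,-2\}$ (so all have order $p$ by Example~\ref{ex.ords}), use Theorem~\ref{thm.PSL.traces} for existence, and choose the signs so that the triple is nonsingular. Your sign analysis is the genuinely essential point and is right: any choice with $\al\be\ga=8$, such as $(2,2,2)$, satisfies $12-8=4$ and is singular, so by Macbeath's Theorem 2 such a pair generates only a structural (Borel-type) subgroup, whereas $(2,2,-2)$ gives $12+8=20$ and $20\neq 4$ in $\mathbb{F}_p$ precisely because $16\not\equiv 0$ for $p$ odd. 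Your prime-field observation -- that the only subfield subgroup of $\PSL(2,p)$ is $\PSL(2,p)$ itself, so Theorem~\ref{thm.PSL.gen} yields generation outright -- is the correct collapse, and is the same mechanism the paper uses in the proof of Theorem~\ref{thm.unmixed.PSL}, where the collapse is instead forced by element orders since $q=p^e$ need not be prime there.

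Two minor points, neither a gap. First, trace $\pm 2$ does not quite characterize unipotents, since $\pm I$ also have these traces; but for $(A,B,C)\in E(2,2,-2)$ a central entry leads to an immediate trace contradiction (e.g.\ $A=\pm I$ forces $\tr B=\pm\tr C$, i.e.\ $2=\pm(-2)$), and for $p\geq 7$ the conclusion $\langle \bar{A},\bar{B}\rangle=\PSL(2,p)$ excludes central entries anyway. Second, your patch for $p=3,5$ is fine ($x=(1\,2\,3)$, $y=(1\,3\,4)$ do generate $A_4$ with $xy$ a $3$-cycle, since they lie in distinct Sylow $3$-subgroups), and for $p=5$ the ``analogous pair'' can be made explicit and uniform: the images of $\bigl(\begin{smallmatrix}1&1\\0&1\end{smallmatrix}\bigr)$ and $\bigl(\begin{smallmatrix}1&0\\-4&1\end{smallmatrix}\bigr)$ are unipotent with product of trace $-2$, and they generate because each nontrivial transvection generates its full root subgroup and the two root subgroups generate $\SL(2,p)$ -- an observation that in fact gives an elementary proof of the whole corollary for every odd $p$, independently of Macbeath's machinery.
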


\subsubsection{Beauville Structures for $\PSL(2,p^e)$}

\begin{proof}[Proof of~\ref{thm.unmixed.PSL}]

It is known by~\cite[Proposition 3.6]{BCG05} (and can be easily
verified) that $\PSL(2,2) \cong S_3$, $\PSL(2,3) \cong A_4$ and
$\PSL(2,4) \cong \PSL(2,5) \cong A_5$ do not admit an unmixed
Beauville structure.

\medskip

{\bf Case $q=p^e$ odd.}

Let $q \geq 13$ be an odd prime power, then we will construct an
unmixed Beauville structure for $\PSL(2,q)$, $(A_1,B_1;A_2,B_2)$, of
type $(\tau_1,\tau_2)$, where
\begin{align*}
\tau_1 = \left(\frac{q-1}{2},\frac{q-1}{2},\frac{q-1}{2}\right)
\text{  and  } \tau_2 =
\left(\frac{q+1}{2},\frac{q+1}{2},\frac{q+1}{2}\right).
\end{align*}

Let $r=\frac{q-1}{2}$ (respectively $r=\frac{q+1}{2}$), and note
that $r>5$. Let $\al$ be a trace of some diagonal split
(respectively non-split) element $A \in \SL(2,q)$ whose image in
$\PSL(2,q)$ has exact order $r$, and note that $\al \neq 0,\pm 1,
\pm 2$, since $A$ is neither of orders $2$ or $3$ nor unipotent (see
Example~\ref{ex.ords}).

Observe that $(\al,\al,\al)$ is a non-singular triple. Indeed, the
equality $3\al^2 - \al^3 =4$ is equivalent to $(\al-2)^2(\al+1)=0$,
but the latter is not possible.

By Theorem~\ref{thm.PSL.traces}, $E(\al,\al,\al) \neq \emptyset$,
and since $(\al,\al,\al)$ is not singular nor small, for $(A,B,C)
\in E(\al,\al,\al)$, one has $A \neq \pm B$, and moreover, the image
of the subgroup ${\langle A,B \rangle}$ is a subfield subgroup of
$\PSL(2,q)$, by Theorem~\ref{thm.PSL.gen}. However, since the order
of $A$ is exactly $\frac{q-1}{2}$ (respectively $\frac{q+1}{2}$)
then the image of the subgroup ${\langle A,B \rangle}$ is exactly
$\PSL(2,q)$.

Observe that $\frac{q-1}{2}$ and $\frac{q+1}{2}$ are relatively
prime. Hence, if $A_1,A_2 \in \PSL(2,q)$ have orders $\frac{q-1}{2}$
and $\frac{q+1}{2}$ respectively, then every two non-trivial powers
$A_1^i$ and $A_2^j$ have different orders, thus
\[
    \{g_1 A_1^i g_1^{-1}\}_{g_1,i} \cap \{g_2 A_2^j g_2^{-1}\}_{g_2,j} = \{1\},
\]
implying that $\Sigma(A_1,B_1,C_1) \cap \Sigma(A_2,B_2,C_2) =
\{1\}$, as needed.

For smaller values of $q$, a computer calculation (using MAGMA)
shows that $\PSL(2,7)$ admits an unmixed Beauville structure of type
$((4,4,4),(7,7,7))$, $\PSL(2,9) \cong A_6$ admits an unmixed
Beauville structure of type $((4,4,4),(5,5,5))$, and $\PSL(2,11)$
admits an unmixed Beauville structure of type $((5,5,5),(6,6,6))$.

\medskip

{\bf Case $q=2^e$ even.}

Let $q \geq 8$ be an even prime power, then we will construct an
unmixed Beauville structure for $\PSL(2,q)$, $(A_1,B_1;A_2,B_2)$, of
type $(\tau_1,\tau_2)$, where
\begin{align*}
\tau_1 = (q-1,q-1,q-1) \text{  and  } \tau_2 =(q+1,q+1,q+1).
\end{align*}

Let $r=q-1$ (respectively $r=q+1$), and note that $r>5$. Let $\al$
be a trace of some diagonal split (respectively non-split) element
$A \in \PSL(2,q)=\SL(2,q)$ of exact order $r$, and note that $\al
\neq 0,1$, since $A$ is neither unipotent nor of order $3$ (see
Example~\ref{ex.ords}). Then the claim follows as in the previous
case by considering $(A,B,C)\in E(\al,\al,\al)$.



\end{proof}


Observe that the above proof actually shows that the group
$\PSL(2,q)$ can admit many Beauville structures of various types. On
the other hand, if the types are fixed then we can deduce the
following.

\begin{cor}\label{cor.Beau.PSL2p}
Let $p$ be an odd prime, and let $r,s>5$ be two relatively prime
integers each of which divides either $\frac{p-1}{2}$ or
$\frac{p+1}{2}$ or $p$. Then $\PSL(2,p)$ admits an unmixed Beauville
structure of type $((r,r,r),(s,s,s))$.
\end{cor}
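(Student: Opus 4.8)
The plan is to follow the template of the proof of Theorem~\ref{thm.unmixed.PSL} almost verbatim, replacing the two specific orders $\frac{q-1}{2}$ and $\frac{q+1}{2}$ by the given coprime orders $r$ and $s$, and exploiting one decisive simplification available over a prime field: since $p$ is prime, $\mathbb{F}_p$ has no proper subfield, so the only subfield subgroup of $\PSL(2,p)$ is $\PSL(2,p)$ itself. Consequently, Macbeath's Theorem~\ref{thm.PSL.gen} will yield full generation the instant we know a triple is neither singular nor small, with no further order computation needed. First I would realize $r$ as an element order by choosing a trace $\al \in \mathbb{F}_p$ with $\mathcal{O}rd(\al)=r$, and likewise $\be$ with $\mathcal{O}rd(\be)=s$. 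Since the desired type is $((r,r,r),(s,s,s))$, I only ever work with the single triple $(\al,\al,\al)$ (respectively $(\be,\be,\be)$).

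With $\al$ fixed, the next step is to check that $(\al,\al,\al)$ is neither singular nor small. Smallness is immediate: all three orders equal $r>5$, so we are never in the range $2\le l,m,n\le 5$ nor do two of them equal $2$. For singularity, $(\al,\al,\al)$ is singular exactly when $3\al^2-\al^3=4$, i.e.\ when $(\al-2)^2(\al+1)=0$, that is $\al\in\{2,-1\}$; by Example~\ref{ex.ords} these traces give orders $p$ and $3$, both excluded since $r>5$ and (generically) $r\ne p$. The same two checks apply to $\be$ and $s$, so both triples satisfy the hypotheses of Theorem~\ref{thm.PSL.gen}.

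Now I would invoke Macbeath directly. By Theorem~\ref{thm.PSL.traces} the set $E(\al,\al,\al)$ is nonempty; pick $(A,B,C)$ in it, let $A_1,B_1$ be the images in $\PSL(2,p)$ and set $C_1=(A_1B_1)^{-1}$. By Theorem~\ref{thm.PSL.gen} the subgroup $\langle A_1,B_1\rangle$ is a subfield subgroup, hence equals $\PSL(2,p)$ by the prime-field remark above; the triple $(A_1,B_1,C_1)$ then has product $1$, generates $\PSL(2,p)$, and has type $(r,r,r)$ because all three elements have trace $\al$. Repeating with $\be$ produces $(A_2,B_2,C_2)$ of type $(s,s,s)$ generating $\PSL(2,p)$. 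Disjointness is forced by coprimality: every element of $\Sigma(A_1,B_1,C_1)$ has order dividing $r$ and every element of $\Sigma(A_2,B_2,C_2)$ has order dividing $s$, so a common element has order dividing $\gcd(r,s)=1$ and is trivial. Hence $(A_1,B_1;A_2,B_2)$ is an unmixed Beauville structure of the required type, verifying condition~\textit{(iii)} of Definition~\ref{prop.beau}.

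The step I expect to cause the most trouble is the very first one, producing an element of order exactly $r$. The hypothesis only asserts that $r$ divides $|\PSL(2,p)|=\tfrac12 p(p-1)(p+1)$, whereas the element orders of $\PSL(2,p)$ are precisely the divisors of $p$, of $\frac{p-1}{2}$, and of $\frac{p+1}{2}$; since $\frac{p-1}{2}$ and $\frac{p+1}{2}$ are coprime, a divisor of the group order need not be an element order (e.g.\ a product of a factor of $\frac{p-1}{2}$ with a factor of $\frac{p+1}{2}$). So the argument really requires $r$ and $s$ to be genuine element orders, i.e.\ each dividing $p$, $\frac{p-1}{2}$ or $\frac{p+1}{2}$, and I would phrase the hypothesis that way. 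A secondary nuisance is the borderline case $r=p$ (possible when $p>5$): there $\al=\pm2$ and $(\al,\al,\al)$ is singular, so Theorem~\ref{thm.PSL.gen} does not apply and one must instead use Corollary~\ref{cor.unip} to generate $\PSL(2,p)$ by a unipotent triple of type $(p,p,p)$; coprimality guarantees at most one of $r,s$ equals $p$, so this substitution is needed for at most one of the two triples.
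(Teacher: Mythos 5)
Your proposal is correct and follows essentially the same route as the paper, whose entire proof is the reduction you describe: if $r$ and $s$ each divide $\frac{p-1}{2}$ or $\frac{p+1}{2}$, run the trace argument of Theorem~\ref{thm.unmixed.PSL} (your observation that over the prime field every subfield subgroup of $\PSL(2,p)$ is the whole group is exactly what makes that argument work for arbitrary $r$, not just $r=\frac{p\pm 1}{2}$), and if $r=p$ invoke Corollary~\ref{cor.unip} instead. Your closing criticism is also well taken: the paper's proof opens by asserting that any $r$ dividing $|\PSL(2,p)|$ must divide $p$, $\frac{p-1}{2}$ or $\frac{p+1}{2}$, which is false as stated (e.g.\ $r=21$ divides $|\PSL(2,13)|=1092$ but is not an element order, so no structure of type $((21,21,21),(13,13,13))$ can exist), and your proposed rephrasing of the hypothesis --- that $r$ and $s$ be element orders --- is the right fix and is satisfied in the paper's only application of this Corollary, namely Corollary~\ref{cor.PSL.types}.
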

\begin{proof}
If each of $r$ and $s$ divides either $\frac{p-1}{2}$ or
$\frac{p+1}{2}$  then the result follows from the proof of
Theorem~\ref{thm.unmixed.PSL}. Otherwise, if $r=p$ (or $s=p$) then
it relies on Corollary~\ref{cor.unip} as well.
\end{proof}

\begin{proof}[proof of Corollary~\ref{cor.PSL.types}]
Without loss of generality we may assume that $s$ is odd. By the
Chinese Remainder Theorem there exists a unique integer $0\leq x
<2rs$ solving the system of simultaneous congruences
\[
    x \equiv 1 \bmod {2r}, \ x \equiv -1 \bmod s.
\]
Note that such $x$ necessarily satisfies that $r \mid \frac{x-1}{2}$
and $s \mid \frac{x+1}{2}$.

Moreover, by Dirichlet's Theorem, the arithmetic progression
$A_{r,s}:=\{2rsn+x: n \in \mathbb{N}\}$ contains infinitely many
primes.

By Corollary~\ref{cor.Beau.PSL2p} the group $\PSL(2,p)$ admits an
unmixed Beauville structure of type $((r,r,r),(s,s,s))$.
\end{proof}

The following two Remarks explain why Theorem~\ref{thm.unmixed.An}
fails to hold in its great generality for the family of groups
$\PSL(2,q)$.

\begin{rem}
Note that unlike the case of Alternating groups, for the group $\PSL(2,p)$,
the condition that $r$ and $s$ are relatively prime is also
\emph{necessary}.

Indeed, assume that $r$ and $s$ are not relatively prime, and let
$d$ be a prime divisor of $\gcd(r,s)$. Then either $d=p$, or $d$
divides $\frac{p-1}{2}$, or $d$ divides $\frac{p+1}{2}$.

Let $A_1$ and $A_2$ be two elements in $\PSL(2,p)$ of orders $r$ and
$s$ respectively, and write $r=r'd$ and $s=s'd$. Assume that $d$
divides $\frac{p-1}{2}$ (or $\frac{p+1}{2}$), then $A_1^{r'}$ and
$A_2^{s'}$ are both of exact order $d$, hence the cyclic subgroups
$\langle A_1^{r'} \rangle$ and $\langle A_2^{s'} \rangle$ are
conjugate in $\PSL(2,p)$, implying that there exist some integers
$k,l$ such that $A_1^{r'k}$ and $A_2^{s'l}$ are conjugate to the
same element of order $d$.

If $d=p$ then $A_1^{r'}$ and $A_2^{s'}$ are both of order $p$, and
so they can be conjugated in $\PSL(2,p)$ to the image of some matrix
{\small $\begin{pmatrix} 1 & a_i \\ 0 & 1
\end{pmatrix}$}, where $a_1,a_2 \in \mathbb{F}_p^*$. Since half the
elements in $S:=\{k:1\leq k \leq p-1\}$ are squares in
$\mathbb{F}_p^*$ and half are non-squares, there exist $k,l \in S$
such that $A_1^{r'k}$ and $A_2^{s'l}$ are both
conjugate in $\PSL(2,p)$ to the image of {\small $\begin{pmatrix} 1 & 1 \\
0 & 1
\end{pmatrix}$}.
\end{rem}

\begin{rem}
Note that Corollary~\ref{cor.Beau.PSL2p} does not hold for the family of groups
$\{\PSL(2,p^e)\}_{p \ prime,\ e \in \mathbb{N}}$,
since one cannot fix a hyperbolic type $(r,s,t)$ and hope that almost all groups
$G=\PSL(2,p^e)$ where $r,s$ and $t$ all divide $|G|$, will be quotients of $\Delta(r,s,t)$.

Indeed, Macbeath~\cite[Theorem 8]{Ma} proved that $\PSL(2,p^e)$ is a
\emph{Hurwitz group}, namely a quotient of $\Delta(2,3,7)$, if
either $e=1$ and $p= 0, \pm 1 \pmod 7$, or $e=3$ and $p= \pm 2, \pm
3 \pmod 7$. Recently, Marion~\cite{Mar09} showed that this
phenomenon occurs in general for any prime hyperbolic type. Namely,
he showed that if $(r,s,t)$ is a hyperbolic triple of primes
and $p$ is a prime number, then there exists a unique integer $e$
such that $\PSL(2,p^e)$ is a quotient of the triangle group
$\Delta(r,s,t)$.

Interestingly, this situation is different for other families of
groups of Lie type of low Lie rank (under the assumption that
$(r,s,t)$ are not too small), as was shown in recent results
of Marion~\cite{Mar3.09,Mar9.09}, which are detailed in
Theorem~\ref{thm.low.rank} below.
\end{rem}


\subsection{Beauville Structures for Other Finite Simple Groups of
Lie Type}\label{sect.Lie}

In this section we prove Theorem~\ref{thm.unmixed.low.rank}
regarding certain families of finite simple groups of Lie type of
low Lie rank. The proof is based on recent results of
Marion~\cite{Mar3.09,Mar9.09}. Moreover, we discuss some Conjectures
on finite simple groups of Lie type in general.


\subsubsection{Beauville Structures for Finite Simple Groups of Low Lie Rank}

\begin{theo}~\cite[Theorems 1,2,4]{Mar3.09} and~\cite[Theorem 1]{Mar9.09}. \label{thm.low.rank}
Let $G$ be one of the finite simple groups of Lie type listed
below, and let $(p_1,p_2,p_3)$ be a hyperbolic triple of primes
$p_1 \leq p_2 \leq p_3$, such that $lcm(p_1,p_2,p_3)$ divides
$|G|$, which, moreover, satisfy the conditions given bellow.
\begin{enumerate}
\item Suzuki groups, $G=^2\!B_2(q)$, where $q=2^{2e+1}$;
\item Ree groups, $G=^2\!G_2(q)$, where $q=3^{2e+1}$;
\item $G=G_2(q)$, where $q=p^e$ for some prime number $p>3$, and
$(p_1,p_2,p_3) \notin \{(2,5,5),(3,3,5),(3,5,5),(5,5,5)\}$;
\item $G=^3\!D_4(q)$, where $q=p^e$ for some prime number $p>3$, and
$(p_1,p_2,p_3)$ are distinct primes, s.t. $\{p_1,p_2\} \neq
\{2,3\}$;
\item $G=\PSL(3,q)$, where $q=p^e$ for some prime $p$, and
$(p_1,p_2,p_3)$ are odd primes;
\item $G=\PSU(3,q)$, where $q=p^e$ for some prime $p$, and
$(p_1,p_2,p_3)$ are odd primes.
\end{enumerate}

Then, if $\phi \in \Hom(\Delta,G)$ is a randomly chosen homomorphism
from the triangle group $\Delta=\Delta(p_1,p_2,p_3)$ to $G$, then
\[
    \lim_{q \rightarrow \infty} Prob\{\phi \text{ is surjective }\}
    =1.
\]
\end{theo}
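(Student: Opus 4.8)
The plan is to estimate the number of homomorphisms $\phi\in\Hom(\Delta,G)$ with each $\phi(c_i)$ of order $p_i$, and then to show that among these the non-surjective ones form a vanishing proportion as $q\to\infty$. Since $\Delta=\Delta(p_1,p_2,p_3)$ is generated by $c_1,c_2,c_3$ subject only to $c_i^{p_i}=1$ and $c_1c_2c_3=1$, such a homomorphism is exactly a triple $(g_1,g_2,g_3)$ of elements of $G$ with $g_i^{p_i}=1$ and $g_1g_2g_3=1$. First I would fix conjugacy classes $\mathbf{C}=(C_1,C_2,C_3)$ with $C_i$ consisting of elements of order $p_i$, and recall the Frobenius class-equation formula: the number of such triples with $g_i\in C_i$ equals
\[
    |\Hom_{\mathbf{C}}(\Delta,G)| = \frac{|C_1|\,|C_2|\,|C_3|}{|G|}\sum_{\chi\in\mathrm{Irr}(G)}\frac{\chi(g_1)\chi(g_2)\chi(g_3)}{\chi(1)},
\]
where $g_i\in C_i$ is any representative. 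The trivial character contributes the main term $|C_1||C_2||C_3|/|G|$, so the whole analysis reduces to controlling the contribution of the nontrivial irreducible characters.

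The key analytic input is a uniform bound on the character ratios $|\chi(g_i)|/\chi(1)$ for elements of prime order. Since each $p_i$ divides $|G|$ and $(p_1,p_2,p_3)$ is hyperbolic, the relevant classes $C_i$ are (generically) regular semisimple when $p_i\neq p$ and regular unipotent when $p_i=p$, and for such elements the Deligne--Lusztig theory of groups of Lie type --- or the explicit character tables available in these low ranks --- gives estimates of the form $|\chi(g)|\le \chi(1)^{1-\delta}$ with $\delta>0$ independent of $q$. Summing, one obtains $\sum_{\chi\neq 1}|\chi(g_1)\chi(g_2)\chi(g_3)|/\chi(1)\to 0$, so that $|\Hom_{\mathbf{C}}(\Delta,G)|\sim |C_1||C_2||C_3|/|G|$. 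Summing this over all admissible class-triples of the prescribed orders then yields an asymptotic formula for the number of homomorphisms with $\phi(c_i)$ of order exactly $p_i$; the dominant contribution comes from the largest (regular semisimple) classes, whose sizes are of order $|G|$ divided by a bounded power of $q$.

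Next I would bound the non-surjective homomorphisms. As $G$ is simple, any proper subgroup generated by $\phi(c_1),\phi(c_2)$ lies in a maximal subgroup $M$, so
\[
    |\{\phi \text{ not onto}\}| \;\le\; \sum_{M}\,|G:N_G(M)|\cdot |\Hom_{\mathbf{C}}(\Delta,M)|,
\]
the sum running over representatives of the conjugacy classes of maximal subgroups. For each of the six families ${}^2B_2(q)$, ${}^2G_2(q)$, $G_2(q)$, ${}^3D_4(q)$, $\PSL(3,q)$, $\PSU(3,q)$ the maximal subgroups are completely classified, falling into a bounded number of types: parabolics, torus normalizers, subfield and subsystem subgroups, and a finite list of subgroups of bounded order. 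For each type I would estimate $|\Hom_{\mathbf{C}}(\Delta,M)|$ by the same Frobenius method applied inside $M$ (or crudely by $|M|^2$ times the density of elements of order $p_i$ in $M$) and check that $|G:N_G(M)|\cdot|\Hom_{\mathbf{C}}(\Delta,M)| = o(|\Hom_{\mathbf{C}}(\Delta,G)|)$.

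The main obstacle will be precisely this case-by-case subgroup analysis, and in particular showing that the contributions of the "large" families of maximal subgroups --- the subfield subgroups and the normalizers of maximal tori --- vanish in the limit. This is exactly where the extra hypotheses enter: the excluded triples for $G_2(q)$, the distinctness of the primes for ${}^3D_4(q)$, and the restriction to odd primes for $\PSL(3,q)$ and $\PSU(3,q)$ are the conditions guaranteeing that elements of orders $p_1,p_2,p_3$ cannot all be forced into a common torus normalizer or a fixed subsystem subgroup, so that no single family absorbs a positive proportion of the homomorphisms. Verifying that these are exactly the degenerate configurations, and ruling them out by order and divisibility arguments together with Marion's detailed counts in \cite{Mar3.09,Mar9.09}, is the technical heart of the proof.
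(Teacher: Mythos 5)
The first thing to note is that the paper contains no proof of this statement: Theorem~\ref{thm.low.rank} is quoted verbatim from Marion's papers \cite{Mar3.09,Mar9.09} and is used as a black box in the proof of Theorem~\ref{thm.unmixed.low.rank}. So there is no in-paper argument to compare yours against, only Marion's own; and as a blind proof your proposal is circular, since at the step you yourself call ``the technical heart'' you invoke ``Marion's detailed counts in \cite{Mar3.09,Mar9.09}'' --- citing the source of the theorem to prove the theorem. The general template you set up (the Frobenius character-sum formula for $|\Hom_{\mathbf{C}}(\Delta,G)|$, then a sieve over the classified maximal subgroups) is indeed the Liebeck--Shalev framework within which Marion works, so the skeleton is the right one; but as written the argument does not close, for two concrete reasons.

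First, the uniform character bound $|\chi(g)|\le\chi(1)^{1-\delta}$ is not available for the classes that actually occur here: the admissible triples include elements of orders $2$, $3$ and $5$, whose classes are small and far from regular (and an element of order $p$ need not be regular unipotent --- in $G_2(q)$ with $p=5$ the regular unipotent elements have order $25$), and for such small classes character ratios decay too slowly for a blanket estimate; one must instead use the explicit generic character tables of these six low-rank families with carefully chosen classes. Similarly, your crude fallback bound $|\Hom_{\mathbf{C}}(\Delta,M)|\le |M|^2\cdot(\cdots)$ is far too weak for the large maximal subgroups: for a parabolic $M$ of $\PSL(3,q)$ one has $|G:M|\,|M|^2$ of order roughly $q^{14}$ against a main term of order roughly $q^{10}$, so the sieve genuinely needs fixed-point-ratio estimates on $|C_i\cap M|/|C_i|$, not order bounds. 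Second, and more importantly, you mislocate the role of the side conditions. The excluded triples for $G_2(q)$, the distinctness condition for $^3\!D_4(q)$ and the oddness condition for $\PSL(3,q)$ and $\PSU(3,q)$ are not there to prevent torus normalizers from absorbing a positive proportion of homomorphisms; they excise exactly the \emph{reducible or rigid} triples, i.e.\ those for which every choice of classes $C_i$ of elements of orders $p_i$ gives $\sum_{i=1}^{3}\dim C_i\le 2\dim G$, so that the main term $|C_1||C_2||C_3|/|G|$ is already too small and the conclusion of the theorem is actually false for those triples, not merely harder to prove. For instance, in $\PSL(3,q)$ an involution class has dimension $4$ and a regular class dimension $6$, so any triple with $p_1=2$ gives at best $4+6+6=16=2\dim G$, the rigid boundary case --- the same rigidity phenomenon the paper itself recalls for $\PSL(2,p^e)$ via \cite{Mar09} --- which is precisely why the theorem demands odd primes there. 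Nothing in your outline detects this dichotomy, and without the dimension criterion your subgroup sieve would be attempting to prove statements that fail for the excluded triples.
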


Now we have all the ingredients needed for the proof of
Theorem~\ref{thm.unmixed.low.rank}.

\begin{proof}[Proof of Theorem~\ref{thm.unmixed.low.rank}]
(1) Let $G=^2\!B_2(q)$, where $q=2^{2e+1}$, then
\[
|G| = q^2(q^2+1)(q-1).
\]
Since $q^2+1 \equiv 0 \pmod 5$, there are at least two prime
numbers, $5$ and some $r>5$, which divide $|G|$. Indeed, $q-1
\equiv 1 \pmod 3$. Moreover $q-1$ is not a power of $5$ since $5
\equiv 1 \pmod 4$, but $q-1 \equiv 3 \pmod 4$. If $q$ is large
enough, then, by Theorem~\ref{thm.low.rank}, the two triangle
groups, $\Delta(5,5,5)$ and $\Delta(r,r,r)$, surject onto $G$, and
hence $G$ admits a Beauville structure of type
$((5,5,5),(r,r,r))$.

(2) Let $G=^2\!G_2(q)$, where $q=3^{2e+1}$, then
\[
|G| = q^3(q-1)(q^3+1).
\]
Since $q^3+1 \equiv 0 \pmod 7$, there are at least two odd prime
numbers, $7$ and some $r$ ($7 \neq r>3$), which divide $|G|$.
Indeed, $q-1$ is not divisible by $3$ nor by $4$. Moreover $q-1$
is not a power of $7$ since $7 \equiv -1 \pmod 8$, but $q-1 \equiv
2 \pmod 8$. If $q$ is large enough, then, by
Theorem~\ref{thm.low.rank}, the two triangle groups,
$\Delta(7,7,7)$ and $\Delta(r,r,r)$, surject onto $G$, and hence
$G$ admits a Beauville structure of type $((7,7,7),(r,r,r))$.

(3) Let $G=G_2(q)$, where $q=p^e$ for some prime number $p>3$, then
\[
    |G|= q^6(q-1)^2(q+1)^2(q^2-q+1)(q^2+q+1),
\]
and so there are at least two distinct prime numbers, $r,s\geq 7$,
which divide $|G|$. To see this, for example, notice that
$q^2+q+1$ and $q^2-q+1$ are odd, coprime, and not divisible by
$5$. So there exists a prime $r \geq 7$ that divides $|G|$. To
find $s$ is enough to prove that $q^2+q+1$ or $q^2-q+1$ are not
powers of $3$. For this is enough to prove that, if they are
divisible by $3$, are not divisible by $9$. If $q^2+q+1$ is
divisible by $3$, then $q \equiv 1 \pmod 3$, and $q^2+q+1 \equiv 3
\pmod 9$. If $q^2-q+1$ is divisible by $3$, then $q \equiv 2 \pmod
3$, and $q^2-q+1 \equiv 3 \pmod  9$. If $q$ is large enough, then,
by Theorem~\ref{thm.low.rank}, the two triangle groups,
$\Delta(r,r,r)$ and $\Delta(s,s,s)$, surject onto $G$, and hence
$G$ admits a Beauville structure of type $((s,s,s),(r,r,r))$.

(4) Let $G=^3\!D_4(q)$, where $q=p^e$ for some prime number $p>3$,
then
\[
    |G| = q^{12}(q-1)^2(q+1)^2(q^2-q+1)^2(q^2+q+1)^2(q^4-q^2+1),
\]
and so there are at least six distinct primes, $p_1=2, p_2=3, p_3,
p_4, p_5, p_6$, which divide $|G|$. Indeed, we can choose for
example $p_3=p$, and $p_4$ and $p_5$ as in (3). For $p_6$ it is
enough to notice that $q^4-q^2+1$ is odd, not divisible by $3$,
and coprime to $q^2+q+1$ and $q^2-q+1$. If $q$ is large enough,
then, by Theorem~\ref{thm.low.rank}, the two triangle groups,
$\Delta(2,p_3,p_5)$ and $\Delta(3,p_4,p_6)$, surject onto $G$, and
hence $G$ admits a Beauville structure of type
$((2,p_3,p_5),(3,p_4,p_6))$.

(5) Let $G=\PSL(3,q)$ (resp. $G=\PSU(3,q)$), where $q=p^e$ for some
prime $p$, then
\[
    |G| = \frac{1}{d}q^3(q-1)^2(q+1)(q^2+q+1),
\]
(resp. $|G| = \frac{1}{d}q^3(q-1)(q+1)^2(q^2-q+1)$ ), where $d=1$ or
$3$.

Hence, there are at least two distinct odd prime numbers, greater
then $3$, $r$ and $s$, which divide $|G|$. Indeed, if $p=2$ and
$e>1$ it is clear, if $p=3$ and $e>1$ then at least one between
$(q-1)$ and $(q+1)$ is not a power of two, hence we can choose
$r$, then proceed as in (3). Else take $r=p$ and proceed as in
(3). If $q$ is large enough, then, by Theorem~\ref{thm.low.rank},
the two triangle groups, $\Delta(r,r,r)$ and $\Delta(s,s,s)$,
surject onto $G$, and hence $G$ admits a Beauville structure of
type $((s,s,s),(r,r,r))$.
\end{proof}


\subsubsection{Conjectures on Finite Simple Classical Groups of Lie Type}
\label{sect.conj}

Liebeck and Shalev raised the following Conjecture in~\cite{LS05}
regarding finite simple classical groups of Lie type.

\begin{conj}[Liebeck-Shalev]
For any Fuchsian group $\Ga$ there is an integer $f(\Ga)$, such
that if $G$ is a finite simple classical group of Lie rank at
least $f(\Ga)$, then the probability that a randomly chosen
homomorphism from $\Ga$ to $G$ is an epimorphism tends to $1$ as
$|G| \to \infty$.
\end{conj}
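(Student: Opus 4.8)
Because this is a conjecture, I can only lay out a strategy in the spirit of the probabilistic method of Liebeck and Shalev~\cite{LS05}; a genuine proof will need representation-theoretic estimates well beyond the elementary tools used here for $A_n$, $S_n$ and $\PSL(2,q)$. The overall plan is to count homomorphisms $\phi\colon\Ga\to G$ by characters and then to show that the non-surjective ones form a vanishing fraction as $|G|\to\infty$.

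First I would fix a signature $(g'\mid m_1,\dots,m_r)$ for $\Ga$, partition $\Hom(\Ga,G)$ according to the conjugacy classes $C_i$ with $\phi(c_i)\in C_i$, and invoke the Frobenius-type formula, which writes $|\Hom_{\mathbf C}(\Ga,G)|$ as the main term $|G|^{2g'-1}\prod_i|C_i|$ multiplied by a sum over the irreducible characters of $G$ of the products $\prod_i\overline{\chi(g_i)}$ weighted by $\chi(1)^{-(2g'-2+r)}$. The analytic heart of the argument is to show that this character sum equals $1+o(1)$, so that $|\Hom_{\mathbf C}(\Ga,G)|\sim|G|^{2g'-1}\prod_i|C_i|$ and the dominant behaviour is governed by the trivial character.

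That reduction rests on two ingredients: uniform upper bounds on the character ratios $|\chi(g_i)|/\chi(1)$ for elements $g_i$ of the prescribed finite orders $m_i$, and the convergence to $1$ of the Witten zeta function $\zeta_G(s)=\sum_\chi\chi(1)^{-s}$ at the exponent $s=2g'-2+r$. This is exactly where the hypothesis $\rk(G)\ge f(\Ga)$ must enter. The minimal nontrivial character degree of a classical group grows with the Lie rank while the number of characters of each size is controlled, so $\zeta_G(s)\to1$ holds only once $s$ exceeds a threshold that shrinks as the rank grows; one therefore chooses $f(\Ga)$ large enough that the fixed exponent $2g'-2+r$ (which is positive since $\Ga$ has positive measure) clears this threshold.

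Finally, to upgrade ``many homomorphisms'' to ``almost all are surjective'', I would bound the non-surjective locus by a union bound over maximal subgroups: every non-surjective $\phi$ factors through some maximal $M<G$, so the non-surjective fraction is at most $\sum_M|\Hom_{\mathbf C}(\Ga,M)|/|\Hom_{\mathbf C}(\Ga,G)|$. Using the Aschbacher classification one would estimate each $|\Hom_{\mathbf C}(\Ga,M)|$ by the same character machinery and show, class by class, that the total is negligible against $|G|^{2g'-1}\prod_i|C_i|$. I expect the hardest points to be twofold: obtaining the character-ratio bounds of the first ingredient uniformly across the relevant classes of classical groups of large rank, and controlling the almost simple irreducible maximal subgroups of Aschbacher's class $\mathcal S$, whose indices can be comparatively small. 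These are precisely the estimates whose full strength was not yet available when this paper was written, which is why the statement is recorded here as a conjecture.
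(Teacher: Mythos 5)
You have judged the situation correctly: this statement is presented in the paper as an \emph{open conjecture}, quoted from Liebeck and Shalev \cite{LS05}, and the paper contains no proof of it --- it only records the observation that the conjecture would yield unmixed Beauville structures of type $((s,s,s),(t,t,t))$ for classical groups of sufficiently large rank, and uses it as inspiration for Conjecture~\ref{conj.Lie.gps}. So there is no paper proof to compare against, and your decision to offer a program rather than a proof is the honest one. The program you describe is indeed the standard Liebeck--Shalev machinery (Frobenius character-sum counting of $\Hom_{\mathbf{C}}(\Gamma,G)$, character-ratio estimates, Witten zeta convergence, union bound over maximal subgroups), and it is the route by which the known cases were obtained; note in particular that for $\Gamma$ of genus $g'\ge 2$ the conjecture is already established in \cite{LS05} for \emph{all} finite simple groups with no rank hypothesis, so the open substance is the polygonal case $g'=0$, where the two estimates you defer are exactly the missing content of the conjecture rather than technical afterthoughts. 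In that sense your text has a gap by construction, but it is the unavoidable one.

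Two substantive corrections to the sketch itself. First, the exponent at which $\zeta_G$ must converge is not $s=2g'-2+r$: that is merely the weight appearing formally in the Frobenius formula, and if one bounds the character ratios trivially by $1$, each term is $\chi(1)^{-(2g'-2)}$, which for $g'=0$ diverges hopelessly. The correct bookkeeping, as in \cite{LS04} for the alternating groups, is a Fomin--Lulov type bound $|\chi(g_i)|\le\chi(1)^{1/m_i+o(1)}$ valid at the dominant, nearly regular classes of elements of order $m_i$; folding this in, the character sum is controlled by $\zeta_G\bigl(2g'-2+\sum_{i=1}^{r}(1-1/m_i)-o(1)\bigr)=\zeta_G\bigl(\mu(\Gamma)-o(1)\bigr)$, so it is the Fuchsian measure $\mu(\Gamma)>0$, typically a small positive number, that must clear the zeta threshold --- this is where $f(\Gamma)$ enters, via the theorem of \cite{LS05} that for each fixed $s>0$ one has $\zeta_G(s)\to 1$ for classical $G$ of rank at least some $r(s)$. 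The rank-uniform classical analogue of the Fomin--Lulov bound is precisely one of the open estimates. Second, the conjecture samples uniformly from all of $\Hom(\Gamma,G)$, not from a fixed $\Hom_{\mathbf{C}}$: the images $\phi(c_i)$ may have order properly dividing $m_i$, and one must first determine which class tuples dominate the count of $\Hom(\Gamma,G)$ (in \cite{LS05} this is the computation of the maximal dimension of $\{x: x^{m}=1\}$ in the ambient algebraic group) and then prove the asymptotics and the generation statement for those dominant tuples. Your sketch silently assumes the classes can be chosen at will, which is legitimate when one only wants to construct \emph{some} surjection --- as this paper does in Theorems~\ref{thm.unmixed.An}, \ref{thm.unmixed.Sn} and \ref{thm.unmixed.low.rank} --- but not for the almost-all statement of the conjecture. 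Neither correction changes your architecture, but both mark the points where the rank hypothesis genuinely bites.
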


If this Conjecture holds, it immediately implies that any finite
simple classical group $G$ of Lie rank large enough admits an
unmixed Beauville structure. Indeed, let $s$ and $t$ be two distinct
primes greater than $3$, then the triangle groups $\Delta(s,s,s)$
and $\Delta(t,t,t)$ will surject onto $G$, if $G$ is of Lie rank
large enough, yielding a Beauville structure of type
$((s,s,s),(t,t,t))$ for $G$. Moreover, this Conjecture inspired us
to formulate Conjecture~\ref{conj.Lie.gps}.

\medskip

\textbf{Acknowledgement.} The authors are grateful to Fritz
Grunewald for inspiring and motivating us to work on this problem
together. He is deeply missed.

The authors would like to thank Ingrid Bauer and Fabrizio Catanese
for suggesting the problems, for many useful discussions and for
their helpful suggestions, especially concerning the Theorem of
Liebeck and Shalev.

We are grateful to Bob Guralnick, Martin Liebeck, Alex Lubotzky,
Martin Kassabov, Aner Shalev, Boris Kunyavskii and Eugene Plotkin
for interesting discussions. We would like to thank Claude Marion
for kindly referring us to his recent results.

The authors acknowledge the support of the DFG Forschergruppe 790
''Classification of algebraic surfaces and compact complex
manifolds''. The first author acknowledges the support of the
European Post-Doctoral Institute and the Max-Planck-Institute for
Mathematics in Bonn.


\begin{thebibliography} {9}
%
\bibitem{BHPV}
W.P. Barth, K. Hulek, C.A.M. Peters, A. Van de Ven,
\textit{Compact complex surfaces}. Second edition. Ergebnisse der
Mathematik und ihrer Grenzgebiete. 3. Folge. A Series of Modern
Surveys in Mathematics [Results in Mathematics and Related Areas.
3rd Series. A Series of Modern Surveys in Mathematics],
\textbf{4}. Springer-Verlag, Berlin, 2004.
%
%
\bibitem{BCG05}
I. Bauer, F. Catanese, F. Grunewald, \textit{Beauville surfaces
without real structures}. In: Geometric methods in algebra and
number theory, Progr. Math., vol \textbf{235}, Birkh\"auser Boston,
(2005) 1--42.
%
\bibitem{BCG06}
I. Bauer, F. Catanese, F. Grunewald, \textit{Chebycheff and Belyi
polynomials, dessins d'enfants, Beauville surfaces and group
theory}. Mediterr. J. Math. \textbf{3, no.2}, (2006) 121--146.
%
%
\bibitem{Be}
A. Beauville, \textit{Surfaces Alg\'{e}briques Complex}. Ast\'{e}risque \textbf{54}, Paris (1978).
%
%
%

%
%
\bibitem{Cat00}
F. Catanese, \textit{Fibred surfaces, varieties isogenous to a
product and related moduli spaces}. Amer. J. Math. \textbf{122},
(2000) 1--44.
%
\bibitem{Co80}
M.D.E. Conder, \textit{Generators for alternating and symmetric
groups}. J. London Math. Soc. {\bf 22} (1980) 75--86.
%
\bibitem{Co90}
M.D.E. Conder, \textit{Hurwitz groups: a brief survey}. Bull. Amer.
Math. Soc. \textbf{23} (1990) 359-370.
%
\bibitem{Co10}
M.D.E. Marston, \textit{An update on Hurwitz groups}. Groups,
Complexity and Cryptology \textbf{2}, no.\ 1, (2010) 35--49.
%
\bibitem{Di}
L. E. Dickson. \textit{Linear groups with an exposition of the
Galois field theory}. (Teubner, 1901).
%
\bibitem{Ev}
B. Everitt, \textit{Alternating quotients of Fuchsian groups}. J.
Algebra {\bf 223} (2000) 457--476.
%
%
%
%
\bibitem{FJ}
Y. Fuertes, G. Jones, \textit{Beauville surfaces and finite groups}.
J. Algebra 340 (2011), 13--27.
%
\bibitem{G}
S. Garion, \textit{On Beauville Structures for $\PSL(2,q)$},
Preprint availiable at arXiv:1003.2792.
%
\bibitem{GLL}
S. Garion, M. Larsen, A. Lubotzky, \textit{Beauville surfaces and
finite simple groups}, to appear in J. Reine Angew. Math.
%
%
\bibitem[Go] {Go}
D. Gorenstein, \textit{Finite groups}. Chelsea Publishing Co., New
York, 1980.
%
%
%
\bibitem{LS04}
M.W. Liebeck, A. Shalev, \textit{Fuchsian groups, coverings of
Riemann surfaces, subgroup growth, random quotients and random
walks}. J. Algebra {\bf 276} (2004) 552--601.
%
\bibitem{LS05}
M.W. Liebeck, A. Shalev, \textit{Fuchsian groups, finite simple
groups and representation varieties}.  Invent. Math.  {\bf 159}
(2005) no. 2, 317--367.
%
\bibitem{Ma}
A. M. Macbeath, \textit{Generators of the linear fractional groups},
Number Theory (Proc. Sympos. Pure Math., Vol. XII, Houston, Tex.,
1967). Amer. Math. Soc., Providence, R.I. (1969) 14--32.
%
%
\bibitem{Mar09}
C. Marion, \textit{Triangle groups and $\PSL_2(q)$}, J. Group Theory
\textbf{12} (2009) 689-708
%
\bibitem{Mar3.09}
C. Marion, \textit{Triangle generation of finite exceptional groups
of low rank}. Preprint.
%
\bibitem{Mar9.09}
C. Marion, \textit{Random and deterministic triangle generation of
three-dimentional classical groups I}. Preprint.
%
%
%
%
\bibitem{Serr}
F. Serrano, \textit{Isotrivial fibred surfaces}. Annali di
Matematica. pura e applicata, Vol. \textbf{CLXXI}, (1996) 63--81.
%
\bibitem{Su}
M. Suzuki, \textit{Group Theory I}. Springer-Verlag, Berlin, 1982.
%
%

\end{thebibliography}
\end{document}